\numberwithin{equation}{section}
\theoremstyle{definition}
\newtheorem{prop}{Proposition}[]
\newtheorem{remark}{Remark}[]
\newtheorem{example}{Example}[section]
\newcommand{\xdashthick}[1][46em]{\rule[0.5ex]{#1}{1.5pt}}
\newcommand{\xdashthin}[1][46em]{\rule[0.5ex]{#1}{0.6pt}}
\title{An Eulerian-Lagrangian Runge-Kutta finite volume (EL-RK-FV) method for solving convection and convection-diffusion equations}
\author[1]{Joseph Nakao}
\author[1]{Jiajie Chen}
\author[1]{Jingmei Qiu}
\affil[1]{Department of Mathematical Sciences, University of Delaware, Newark, DE 19711}
\date{}
\begin{document}

\maketitle


\begin{abstract}
    \noindent We propose a new Eulerian-Lagrangian Runge-Kutta finite volume method for numerically solving convection and convection-diffusion equations. Eulerian-Lagrangian and semi-Lagrangian methods have grown in popularity mostly due to their ability to allow large time steps. Our proposed scheme is formulated by integrating the PDE on a space-time region partitioned by approximations of the characteristics determined from the Rankine-Hugoniot jump condition; and then rewriting the time-integral form into a time differential form to allow application of Runge-Kutta (RK) methods via the method-of-lines approach. The scheme can be viewed as a generalization of the standard Runge-Kutta finite volume (RK-FV) scheme for which the space-time region is partitioned by approximate characteristics with zero velocity. The high-order spatial reconstruction is achieved using the recently developed weighted essentially non-oscillatory schemes with adaptive order (WENO-AO); and the high-order temporal accuracy is achieved by explicit RK methods for convection equations and implicit-explicit (IMEX) RK methods for convection-diffusion equations. Our algorithm extends to higher dimensions via dimensional splitting. Numerical experiments demonstrate our algorithm's robustness, high-order accuracy, and ability to handle extra large time steps.
\end{abstract}

\noindent\textbf{Keywords:} Eulerian-Lagrangian, semi-Lagrangian, convection-diffusion equation, WENO-AO, IMEX Runge-Kutta


\section{Introduction}
In this paper, we are concerned with numerically solving convection-diffusion equations of the form
\begin{equation}\label{CDeqn}
\begin{cases}
    u_t + \nabla\cdot\mathbf{F}(u) = \epsilon\Delta u + g(\mathbf{x},t),&\mathbf{x}\in\mathcal{D},\quad t>0,\\
    u(\mathbf{x},t=0)=u_0(\mathbf{x}),&\mathbf{x}\in\mathcal{D},
\end{cases}
\end{equation}
where $\epsilon\geq 0$. We propose an Eulerian-Lagrangian Runge-Kutta finite volume (EL-RK-FV) scheme utilizing weighted essentially non-oscillatory (WENO) schemes with adaptive order (WENO-AO) for spatial reconstruction. The proposed method is designed for one-dimensional problems of the form \eqref{CDeqn} and extended to higher-dimensional problems via dimensional splitting.\\
\indent Eulerian-Lagrangian (EL) and semi-Lagrangian (SL) schemes \cite{Celia1990,Russell2002,Xiu2001} have proven to be computationally effective when solving hyperbolic problems because of their ability to employ high spatial resolution schemes while admitting very large CFL with numerical stability. Generally speaking, an EL or SL method involves working on a background grid so that high-order spatial resolutions can be used (the Eulerian part), and tracing characteristics of the cell boundaries backward/forward in time to relax the CFL constraint (the Lagrangian part). Such methods have been developed in a wide variety of frameworks: discontinuous Galerkin \cite{Cai2017,Ding2020,Qiu2011b,Rossmanith2011}, finite difference \cite{Carrillo2007,Chen2021,Huot2003,Li2022,Qiu2010,Qiu2011,Xiong2019}, finite volume \cite{Abreu2017,Benkhaldoun2015,Crouseilles2010,Filbet2001,Huang2012,Huang2017}. Although the SL and EL frameworks are similar in spirit, the SL framework assumes exact characteristic tracing and hence poses difficulties when considering nonlinear problems. Two other methods similar to EL and SL methods are the arbitrary Lagrangian-Eulerian (ALE) methods where an arbitrary mesh velocity not necessarily aligned with the fluid velocity is defined \cite{Boscheri2014,Boscheri2013,Boscheri2015,Boscheri2017,Donea2004,Hirt1974,Peery2000}, and moving mesh methods where the PDE is first evolved in time and then followed by some mesh-redistribution procedure \cite{Li2001,Li1997,Luo2019,Russell2002,Stockie2001,Tang2003}. The main difference between these two methods and the previously mentioned methods is that they move the mesh adaptively to focus resolving the solution around sharp transitions; whereas EL and SL methods evolve the equation by following characteristics.\\
\indent Our goal in this paper is to develop a new high-order EL method in the finite volume framework using method-of-lines (MOL) RK time discretizations. Finite volume methods are attractive since they are naturally mass conservative, easy to physically interpret, and modifiable for nonuniform grids. Similar to the recent developments made by Huang, Arbogast, and Qiu \cite{Huang2012,Huang2017}, we use approximate characteristics to define a traceback space-time region, and then use WENO reconstructions to evaluate the modified flux. Huang and Arbogast developed a re-averaging technique that allows high-order reconstruction of the solution at arbitrary points by applying a standard WENO scheme \cite{Cockburn1997,Shu2009} over a uniform reconstruction grid that is defined separately. They then used a natural continuous extension \cite{Zennaro1986} of Runge-Kutta schemes, which requires the solution at several Gaussian nodes of the interval $[t^n,t^{n+1}]$, to evolve the solution along the approximate characteristics. \\
\indent The novelty of our proposed method is twofold: (1) the partition of space-time regions formed by linear approximations of the characteristic curves, and (2) integrating the differential equation over the partitioned space-time regions, followed by rewriting the space-time integral form of the equation into a spatial-integral time-differential form. In this way, a MOL RK type method can be directly applied for time discretization, thus avoiding the need to use a natural continuous extension of RK schemes. To be more precise, we construct linear approximate characteristics by using the Rankine-Hugoniot jump condition to define the traceback space-time regions. If the linear approximate characteristics are defined with zero velocity, then the proposed EL-RK-FV scheme reduces to the standard RK-FV method. Whereas, when linear space-time curves adequately approximate the exact characteristics, a large time stepping size is still permitted. We use WENO-AO to perform a solution remapping of the uniform cell averages onto the possibly nonuniform traceback cells. The recently developed WENO-AO schemes \cite{Arbogast2020,Balsara2016,Balsara2020} are robust and guarantee the existence of the linear weights at arbitrary points. We note that Chen, et al. used WENO-AO schemes in the SL framework \cite{Chen2021}, and Huang and Arbogast have recently used WENO-AO schemes in the Eulerian framework \cite{Arbogast2019,Arbogast2020}. RK methods are used to evolve the MOL system along the approximate characteristics. Explicit RK methods, such as the strong stability-preserving (SSP) RK methods \cite{Gottlieb2001}, are used for convection equations; and implicit-explicit (IMEX) RK methods \cite{Ascher1997,Conde2017,Higueras2014} are used for convection-diffusion equations. In the latter case, the non-stiff convective term is treated explicitly and the stiff diffusive term is treated implicitly. Dimensional splitting is used to extend the one-dimensional algorithm to solve multi-dimensional problems. The proposed method is high-order accurate, capable of resolving discontinuities without oscillations, mass conservative, and stable with large time stepping sizes.\\
\indent The paper is organized as follows. We discuss the EL-RK-FV algorithm for pure convection problems in Section 2. In Section 3, we discuss the EL-RK-FV algorithm, coupled with IMEX RK schemes, for convection-diffusion equations. Numerical performance of the EL-RK-FV algorithm is shown in Section 4 by applying the algorithm to several linear and nonlinear test problems. Conclusions are made in Section 5. Appendices are listed after the references.


\section{The EL-RK-FV method for pure convection problems}

The spirit of the EL-RK-FV method is best demonstrated by starting with a pure convection problem in one dimension, i.e., equation \eqref{CDeqn} with $\epsilon=0$ and $g(x)=0$. Let the flux be denoted $f(x)$. We first discuss the formulation of the scheme in Section~\ref{sec2.1}, followed by discussion of high-order spatial reconstruction in Section~\ref{sec2.2} and time discretization in Section~\ref{sec2.3}.

\subsection{
Scheme formulation
}
\label{sec2.1}

We discretize the spatial domain $[a,b]$ into $N_x$ intervals with $N_x+1$ uniformly distributed nodes
\[a=x_{\frac{1}{2}}<x_{\frac{3}{2}}<...<x_{N_x-\frac{1}{2}}<x_{N_x+\frac{1}{2}}=b.\]
Define the cells $I_j\coloneqq[x_{j-\frac{1}{2}},x_{j+\frac{1}{2}}]$ with centers $x_j=(x_{j-\frac{1}{2}}+x_{j+\frac{1}{2}})/2$ and widths $\Delta x_j=\Delta x$ for $j=1,...,N_x$. We let
\begin{equation}
    \label{CFL}
    \Delta t = \frac{CFL\Delta x}{\text{max}|f'(u)|},
\end{equation}
where $CFL$ defines the time stepping size. In contrast to Eulerian methods, which evolve the solution on a stationary mesh, our EL algorithm proposes tracing the characteristics \textit{backwards} in time from $t^{n+1}$ to $t^n$ to partition a set of space-time regions based on the computational grid. Since tracing characteristics in the nonlinear case is often nontrivial, we consider computing \textit{approximations of characteristics} that are linear space-time curves. In particular, the \textit{approximate characteristic speeds} at nodes $x_{j+\frac{1}{2}}$ and time $t^{n+1}$ are defined using the Rankine-Hugoniot jump condition,
\begin{equation}\label{RHjump}
	\nu_{j+\frac{1}{2}} = 
	\begin{cases}
		\mathlarger{\frac{f(\overline{u}_{j+1}) - f(\overline{u}_j)}{\overline{u}_{j+1} - \overline{u}_j}},&\overline{u}_{j+1}\neq \overline{u}_j,\\
		f'(\overline{u}_j),&\overline{u}_{j+1}=\overline{u}_j,
	\end{cases}
\end{equation}
where $\overline{u}_j$ and $\overline{u}_{j+1}$ are the cell averages at time $t^n$. In practice we tested if $|\overline{u}_{j+1}-\overline{u}_j|<\epsilon$, with $\epsilon=1.0e-8$, in which case we took $\overline{u}$ to be the average of $\overline{u}^-$ and $\overline{u}^+$. As seen in Figure \ref{spacetime}, the (upstream) traceback nodes can be defined by
\begin{equation}
    x_{j+\frac{1}{2}}^*\coloneqq x_{j+\frac{1}{2}} - \nu_{j+\frac{1}{2}}\Delta t,\quad j=1,...,N.
\end{equation}
Define $\tilde{x}_{j+\frac{1}{2}}(t)=x^*_{j+\frac{1}{2}}+\nu_{j+\frac{1}{2}}(t-t^n)$ with $t^n\leq t\leq t^{n+1}$, for $j=0, 1,2,...,N$. The approximate characteristics are given by the space-time tracelines
\[\mathcal{S}_{left}\coloneqq\{(\tilde{x}_{j-\frac{1}{2}}(t),t)\text{ : }t^n\leq t\leq t^{n+1}\}\quad\text{and}\quad\mathcal{S}_{right}\coloneqq\{(\tilde{x}_{j+\frac{1}{2}}(t),t)\text{ : }t^n\leq t\leq t^{n+1}\}.\]
Note that $x_{j+\frac{1}{2}}^*=\tilde{x}_{j+\frac{1}{2}}(t^n)$, $x_{j+\frac{1}{2}}=\tilde{x}_{j+\frac{1}{2}}(t^{n+1})$, and the (upstream) traceback cells $I_j^*=\tilde{I}_j(t^n)$ are in general nonuniform. We define the space-time domain $\Omega_j$ as the region bounded by $I_j$, $I_j^*$, $\mathcal{S}_{left}$, and $\mathcal{S}_{right}$.
\begin{figure}[h!]
	\centering
	\includegraphics[width=0.6\textwidth]{./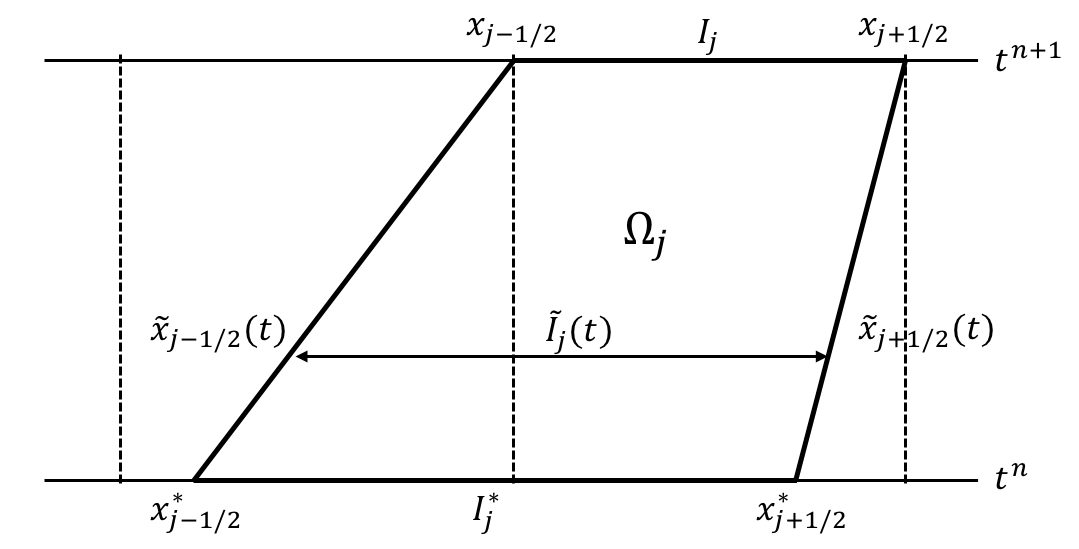}
	\caption{The space-time region $\Omega_j$.}
	\label{spacetime}
\end{figure}
\\
With the constructed space-time region $\Omega_j$, we rewrite
the one-dimensional pure convection problem in divergence form $\nabla_{t,x}\cdot(u,f(u))^T=0$, integrate it over $\Omega_j$, and apply the divergence theorem to get
\begin{align}
\begin{split}\label{semidiscrete0}
	&\int_{I_j}{u(x,t^{n+1})dx} - \int_{I_j^*}{u(x,t^n)dx}\\
	=&- \left[\int_{t^n}^{t^{n+1}}{\big(f(u(\tilde{x}_{j+\frac{1}{2}}(t),t))-\nu_{j+\frac{1}{2}}u(\tilde{x}_{j+\frac{1}{2}}(t),t)\big)dt} - \int_{t^n}^{t^{n+1}}{\big(f(u(\tilde{x}_{j-\frac{1}{2}}(t),t))-\nu_{j-\frac{1}{2}}u(\tilde{x}_{j-\frac{1}{2}}(t),t)\big)dt}\right].
\end{split}
\end{align}
We rewrite the time-integral form \eqref{semidiscrete0} into the time-differential form
to get
\begin{equation}\label{semidiscrete1}
	\frac{d}{dt}\int_{\tilde{I}_j(t)}{u(x,t)dx} = -\Big[F_{j+\frac{1}{2}}(t) - F_{j-\frac{1}{2}}(t)\Big],
\end{equation}

\noindent where $F_{j+\frac{1}{2}}(t)\coloneqq f(u(\tilde{x}_{j+\frac{1}{2}}(t),t))-\nu_{j+\frac{1}{2}}u(\tilde{x}_{j+\frac{1}{2}}(t),t)$ is called the \textit{modified flux function}. Choosing any appropriate monotone numerical flux function
\begin{equation}
    \label{numflux}
    \hat{F}_{j+\frac{1}{2}}(t) = \hat{F}_{j+\frac{1}{2}}(u_{j+\frac{1}{2}}^{-},u_{j+\frac{1}{2}}^{+};t)
\end{equation}
(e.g., Lax-Friedrichs flux) for the modified flux function, equation \eqref{semidiscrete1} can be rewritten as the following semi-discrete finite volume scheme:
\begin{equation}\label{semidiscrete2}
	\frac{d}{dt}\int_{\tilde{I}_j(t)}{u(x,t)dx} = -\Big[\hat{F}_{j+\frac{1}{2}}(t) - \hat{F}_{j-\frac{1}{2}}(t)\Big].
\end{equation}
Here, the starting condition is obtained by a solution remapping onto a trackback grid, discussed in Section~\ref{sec2.2}; $\hat{F}_{j+\frac{1}{2}}(t)$ are computed from \eqref{numflux} with reconstructed values from neighboring cell averages at time $t$, discussed in Section~\ref{sec2.3}. We evolve equation \eqref{semidiscrete2} by the MOL approach with explicit RK schemes, discussed in Section~\ref{sec2.4}.

\subsection{Solution remapping onto a traceback grid}
\label{sec2.2}

Referring back to Figure \ref{spacetime} and equation \eqref{semidiscrete0}, we see that the solution will need to be projected onto the traceback grid in order to compute the \textit{traceback cell averages} 
\begin{equation}
    \tilde{u}_j^n \coloneqq \frac{1}{\Delta x_j^*}\int_{I_j^*}{u(x,t^n)dx},\qquad j=1,2,...,N,
\end{equation}
that is, the starting condition for \eqref{semidiscrete2}. Hence, we desire a procedure for an \textit{integral reconstruction} that --in general-- uses known uniform cell averages $\{ \overline{u}_j(t)\text{ : }j=1,2,...,N\}$ to approximate the desired cell averages $\{ \tilde{u}_j(t)\text{ : }j=1,2,...,N \}$ defined by
\begin{equation}
    \tilde{u}_j(t) \coloneqq \frac{1}{\Delta\tilde{x}_j(t)}\int_{\tilde{I}_j(t)}{u(x,t)dx},\qquad j=1,2,...,N.
\end{equation}
Unless otherwise stated, overlines (e.g., $\overline{u}_j(t)$) denote \textit{uniform} cell averages, and tildes (e.g., $\tilde{u}_j(t)$) denote \textit{nonuniform} cell averages.\\
\ \\
Since discontinuities and sharp gradients can occur when solving pure convection problems, we use high-resolution schemes to control spurious oscillations, e.g., weighted essential non-oscillatory (WENO) methods \cite{Balsara2016,Cockburn1997,Levy1999,Shu2009}. Again, we assume to only be given the uniform cell averages at some time $t$. One might apply the well known WENO procedure presented in \cite{Cockburn1997,Shu2009} to obtain the \textit{reconstruction polynomials} $\mathcal{R}_j(x\in I_j)$ in terms of the neighboring uniform cell averages (at time $t$) $\overline{u}_i$, $i=j-p,...,j+q$. Referring to Figure \ref{spacetime} for the sake of demonstration, we might then approximate the cell averages $\tilde{u}_j^*$ by
\begin{equation}
    \label{WENO}
    \tilde{u}_j^* \approx \frac{1}{\Delta x_j^*}\left(\int_{I_j^*\cap I_{j-1}}{\mathcal{R}_{j-1}(x)dx} + \int_{I_j^*\cap I_{j}}{\mathcal{R}_{j}(x)dx}\right).
\end{equation}
However, the linear weights in the WENO reconstruction are not guaranteed to exist or be positive at arbitrary points. To alleviate this issue, we instead use the WENO schemes with adaptive order (WENO-AO) presented in \cite{Balsara2016} since the linear weights exist at arbitrary points.\\
\ \\
The overarching idea of WENO-AO methods is to provide high order accuracy for smooth solutions over a large center stencil and adaptively reduce to lower order accuracy when the solution does not permit the high order accuracy. This is done by creating a nonlinear hybridization between a large center stencil with high order accuracy, and very stable lower order WENO schemes (e.g., CWENO schemes \cite{Levy1999}). Aside from the high order accuracy and existence of linear weights at arbitrary points, the robustness of these WENO-AO schemes is particularly attractive for our purposes. The authors in \cite{Balsara2016} write WENO-AO($p,r$) to denote an adaptive order that is at best $p$th order (from the large center stencil) and at worst $r$th order (from the stable lower order stencils). For our purposes we use WENO-AO(5,3). The end product of WENO and WENO-AO methods is ultimately a reconstruction polynomial $\mathcal{R}_j(x\in I_j)$ that we shall use for reconstruction.\\
\ \\
Equation \eqref{WENO} is valid when using WENO-AO reconstruction polynomials $\mathcal{R}_j(x\in I_j)$. In general, if the traceback points $\tilde{x}_{j-\frac{1}{2}}(t)$ and $\tilde{x}_{j+\frac{1}{2}}(t)$ reside in cells $I_{\ell}$ and $I_r$ respectively, then
\begin{equation}
    \label{WENOAO}
    \int_{\tilde{I}_j(t)}{u(x,t)dx} \approx \int_{\tilde{I}_j(t)\cap I_{\ell}}{\mathcal{R}_{\ell}(x)dx} + \Delta x\sum\limits_{i=\ell+1}^{r-1}{\overline{u}_i(t)} + \int_{\tilde{I}_j(t)\cap I_{r}}{\mathcal{R}_{r}(x)dx}.
\end{equation}
Below, we summarize the integral reconstruction procedure using WENO-AO as the Algorithm 1.\\

\noindent\xdashthick\\
\textbf{Algorithm 1.} Integral reconstruction using WENO-AO\\
\xdashthin\\
\textbf{Input:} uniform cell averages $\overline{u}_j(t)$ on the background grid of nodes $x_{j+\frac{1}{2}}$.\\
\textbf{Output:} possibly nonuniform cell averages $\tilde{u}_j(t)$ on the traceback grid of nodes $\tilde{x}_{j+\frac{1}{2}}(t)$.\\
\par\qquad \textbf{do $j=1,2,...,N$}
\par\qquad\qquad Locate the uniform background cells that $\tilde{x}_{j-\frac{1}{2}}(t)$ and $\tilde{x}_{j+\frac{1}{2}}(t)$ reside in.
\par\qquad\qquad Call these cells $I_{\ell}$ and $I_r$, respectively.
\par\qquad\qquad\textbf{if $\ell=r$}
\par\qquad\qquad\qquad Compute $\mathlarger{\tilde{u}_j(t) \approx \frac{1}{\Delta\tilde{x}_j(t)}\int_{\tilde{I}_j(t)}{\mathcal{R}_{\ell}(x)dx}}$.
\par\qquad\qquad\textbf{else}
\par\qquad\qquad\qquad\textbf{do $k=\ell,r$}
\par\qquad\qquad\qquad\qquad Compute $\mathlarger{\int_{\tilde{I}_j(t)\cap I_k}{\mathcal{R}_k(x)dx}}$.
\par\qquad\qquad\qquad\textbf{end do}
\par\qquad\qquad\qquad Compute $\mathlarger{\tilde{u}_j(t) \approx \frac{1}{\Delta\tilde{x}_j(t)}\left(\int_{\tilde{I}_j(t)\cap I_{\ell}}{\mathcal{R}_{\ell}(x)dx} + \Delta x\sum\limits_{i=\ell+1}^{r-1}{\overline{u}_i(t)} + \int_{\tilde{I}_j(t)\cap I_{r}}{\mathcal{R}_{r}(x)dx}\right)}$.
\par\qquad\qquad\textbf{end if}
\par\qquad\textbf{end do}\\
\xdashthick

\subsection{Reconstruction of point values}
\label{sec2.3}

Referring back to Figure \ref{spacetime} and equations \eqref{semidiscrete0}-\eqref{semidiscrete2}, we also need to reconstruct the point values $u(\tilde{x}^-_{j+\frac{1}{2}}(t),t)$ and $u(\tilde{x}^+_{j+\frac{1}{2}}(t),t)$ for the modified flux function. However, the WENO-AO schemes in \cite{Balsara2016} assume a uniform grid for convenience and efficiency. We wish to avoid using nonuniform WENO methods since the linear weights need to be recomputed every step and will quickly become expensive. However, nonuniform WENO methods for two-dimensional problems \cite{Balsara2020,Zhu2017,Zhu2017b,Zhu2019,Zhu2018} might become a more reasonable and realistic choice when dealing with non-splitting algorithms. Yet, we still need to reconstruct the left and right limits on the generally nonuniform traceback grid. We propose using the continuous piecewise-linear transformation from the nonuniform grid consisting of nodes $\tilde{x}_{j+\frac{1}{2}}(t)$ to the uniform background grid; performing a WENO-AO reconstruction on the uniform grid; and mapping back to the nonuniform grid to obtain the desired limits. We call this the \textit{nonuniform-to-uniform transformation}. Given a fixed $t\in[t^n,t^{n+1}]$, consider the linear bijection $\phi_j:\tilde{I}_j(t)\rightarrow I_j$ for $j=1,2,...,N$. Letting $x\in\tilde{I}_j(t)$ and $\xi\in I_j$,
\begin{equation}
\label{eq: transform}
    \int_{\tilde{I}_j(t)}{u(x,t)dx} = |J|\int_{I_j}{u(x(\xi),t)d\xi},
\end{equation}
where $|J|(\xi) = |dx/d\xi|(\xi)$ is the Jacobian of the bijection $\phi_j$. In particular, the Jacobian is constant for the linear interpolant,
\begin{equation}
    \label{}
    |J|=\left|\frac{d}{d\xi}\left(\frac{\Delta\tilde{x}_j(t)}{\Delta x}\xi - \frac{x_{j+\frac{1}{2}}\tilde{x}_{j-\frac{1}{2}}(t) + x_{j-\frac{1}{2}}\tilde{x}_{j+\frac{1}{2}}(t)}{\Delta x}\right)\right| = \frac{\Delta\tilde{x}_j(t)}{\Delta x}.
\end{equation}
Since we want to apply WENO-AO over the uniform grid, we define the \textit{auxiliary uniform cell averages} as
\begin{equation}
    \check{u}_j(t) \coloneqq \frac{1}{\Delta x}\int_{I_j}{u(x(\xi),t)d\xi} = \frac{1}{\Delta x|J|}\int_{\tilde{I}_j(t)}{u(x,t)dx} = \frac{1}{\Delta\tilde{x}_j(t)}\int_{\tilde{I}_j(t)}{u(x,t)dx} = \tilde{u}_j(t).
\end{equation}
Note that we have shown the auxiliary uniform cell averages (on the uniform background grid) are identical to the nonuniform cell averages. Hence, under this continuous piecewise-linear mapping, we can directly use the nonuniform cell averages at time $t$ in the (uniform) WENO-AO procedure to obtain the left and right limits $u(\tilde{x}^-_{j+\frac{1}{2}}(t),t)$ and $u(\tilde{x}^+_{j+\frac{1}{2}}(t),t)$. We further note that the high-order accuracy is preserved with such a strategy only when there is a smooth mapping with equation \eqref{eq: transform}. Theoretical justification for the existence of such a mapping is highly nontrivial. Yet, our numerical tests verify that high-order spatial accuracy is achieved, when the characteristic field is smooth.\\

\noindent \xdashthick\\
\textbf{Algorithm 2.} Reconstruction using WENO-AO with the nonuniform-to-uniform transformation\\
\xdashthin\\
\textbf{Input:} nonuniform cell averages $\tilde{u}_j(t)$ on the traceback grid.\\
\textbf{Output:} left and right limits $u(\tilde{x}^-_{j+\frac{1}{2}}(t),t)$ and $u(\tilde{x}^+_{j+\frac{1}{2}}(t),t)$.\\
\par\qquad \textbf{do $j=1,2,...,N$}
\par\qquad\qquad Calculate the (uniform) WENO-AO reconstruction polynomial $\mathcal{R}_j(x\in I_j)$ in terms of
\par\qquad\qquad the neighboring auxiliary uniform cell averages $\check{u}_i(t)=\tilde{u}_i(t)$, $i=j-p,...,j+q$.
\par\qquad\qquad Compute the left limit $u(\tilde{x}^-_{j+\frac{1}{2}}(t),t) \approx \mathcal{R}_j(x_{j+\frac{1}{2}})$.
\par\qquad\qquad Compute the right limit $u(\tilde{x}^+_{j-\frac{1}{2}}(t),t) \approx \mathcal{R}_j(x_{j-\frac{1}{2}})$.
\par\qquad\textbf{end do}\\
\xdashthick

\subsection{Time evolution with explicit Runge-Kutta methods}
\label{sec2.4}

Algorithms 1 and 2 now allow us to perform (integral) reconstruction on a traceback grid consisting of nodes $\tilde{x}_{j+\frac{1}{2}}(t)$. With these two tools we can evolve equation \eqref{semidiscrete2} using any explicit RK method. Recall that our primary goal is to achieve high order accuracy while also taking large time steps. In our numerical tests we use WENO-AO(5,3) for the spatial approximation; although higher order WENO-AO methods can certainly be used. As such, we would like to use high-order time stepping methods. In the cases where the solution is smooth, the standard fourth-order RK method suffices. However, if the solution is discontinuous (e.g., a travelling Heaviside step function), then we require an explicit SSP RK method \cite{Gottlieb2001}. Explicit SSP RK methods are especially attractive when numerically solving hyperbolic conservation laws because they maintain strong stability while also achieving high order accuracy in time. When applicable, we use the optimal three-stage, third-order explicit SSP RK method outlined below for demonstrative purposes.

\begin{align}\label{SSPRK3}
\begin{split}
	\Delta\tilde{x}_j^{(0)}\tilde{u}_j^{(0)} &= \Delta x_j^*\overline{u}_j^*,\\
	\Delta\tilde{x}_j^{(1)}\tilde{u}_j^{(1)} &= \Delta\tilde{x}_j^{(0)}\tilde{u}_j^{(0)} - \Delta t\big(\hat{F}_{j+\frac{1}{2}}^{(0)}(u^-,u^+;t^n)-\hat{F}_{j-\frac{1}{2}}^{(0)}(u^-,u^+;t^n)\big),\\
	\Delta\tilde{x}_j^{(2)}\tilde{u}_j^{(2)} &= \Delta\tilde{x}_j^{(0)}\tilde{u}_j^{(0)} - \frac{\Delta t}{4}\Big[\big(\hat{F}_{j+\frac{1}{2}}^{(0)}(u^-,u^+;t^n)-\hat{F}_{j-\frac{1}{2}}^{(0)}(u^-,u^+;t^n)\big)\\
	&\qquad\qquad\qquad\qquad+ \big(\hat{F}_{j+\frac{1}{2}}^{(1)}(u^-,u^+;t^{n+1})-\hat{F}_{j-\frac{1}{2}}^{(1)}(u^-,u^+;t^{n+1})\big)\Big],\\
	\Delta x_j\overline{u}_j^{n+1} &= \Delta\tilde{x}_j^{(0)}\tilde{u}_j^{(0)} - \frac{2\Delta t}{3}\big(\hat{F}_{j+\frac{1}{2}}^{(2)}(u^-,u^+;t^{n+\frac{1}{2}}) - \hat{F}_{j-\frac{1}{2}}^{(2)}(u^-,u^+;t^{n+\frac{1}{2}})\big)\\
	&\qquad\qquad\qquad\qquad- \frac{\Delta t}{6}\Big[\big(\hat{F}_{j+\frac{1}{2}}^{(0)}(u^-,u^+;t^n)-\hat{F}_{j-\frac{1}{2}}^{(0)}(u^-,u^+;t^n)\big)\\
	&\qquad\qquad\qquad\qquad+ \big(\hat{F}_{j+\frac{1}{2}}^{(1)}(u^-,u^+;t^{n+1})-\hat{F}_{j-\frac{1}{2}}^{(1)}(u^-,u^+;t^{n+1})\big)\Big],
\end{split}
\end{align}

\noindent where $\hat{F}_{j\pm\frac{1}{2}}^{(k)}(u^-,u^+;t)$ denotes using the cell averages $\tilde{u}_j^{(k)}$ from stage $k$ to approximate the limits $u_{j\pm\frac{1}{2}}^{-}$ and $u_{j\pm\frac{1}{2}}^{+}$ in the numerical flux function at time $t$ using Algorithms 1 and 2. The Butcher tables for other explicit RK methods are provided in Appendix A.

\begin{remark}
If the approximate characteristics are defined such that $\nu_{j+\frac{1}{2}}=0$ for all $j$, then the EL-RK-FV scheme reduces to the standard RK-FV scheme \cite{Shu2009}. Referring to Figure \ref{spacetime}, the approximate characteristics would be vertical lines.
\end{remark}

\subsection{Two-dimensional problems}
\label{sec2.5}

We now consider the two-dimensional equation
\begin{equation}
\label{2Dscalar}
    u_t + f(u)_x + g(u)_y = 0.
\end{equation}
The spatial domain is discretized into $N_xN_y$ cells, $I_{i,j}\coloneqq I_i\times I_j$ with centers $x_{i,j}=((x_{i-\frac{1}{2}}+x_{i+\frac{1}{2}})/2,(y_{j-\frac{1}{2}}+y_{j+\frac{1}{2}})/2)$, where the spatial discretizations in $x$ and $y$ are respectively given by
\[0=x_{\frac{1}{2}}<x_{\frac{3}{2}}<...<x_{N_x-\frac{1}{2}}<x_{N_x+\frac{1}{2}}=2\pi\qquad\text{and}\qquad 0=y_{\frac{1}{2}}<y_{\frac{3}{2}}<...<y_{N_y-\frac{1}{2}}<y_{N_y+\frac{1}{2}}=2\pi.\]
The CFL number is defined as
\begin{equation}
    \label{CFL2D}
    \Delta t = \frac{CFL}{\mathlarger{\frac{\max{|f'(u)|}}{\Delta x} + \frac{\max{|g'(u)|}}{\Delta y}}},
\end{equation}
and the uniform cell averages at time $t^n$ are defined by
\begin{equation}
    \tilde{\bar{u}}_{i,j}^n\coloneqq \frac{1}{\Delta x\Delta y}\int_{I_{i,j}}{u(x,y,t^n)dxdy}.
\end{equation}
In the two-dimensional case, we also want to compute the \textit{interval averages} over the interval $I_i$ at a fixed $y$, or over the interval $I_j$ at a fixed $x$. Define the uniform \textit{interval averages} at time $t^n$ with one variable fixed by
\begin{subequations}
\begin{equation}
    \bar{u}_{i|y}^n\coloneqq \frac{1}{\Delta x}\int_{I_i}{u(x,y,t^n)dx},
\end{equation}
\begin{equation}
    \tilde{u}_{j|x}^n\coloneqq \frac{1}{\Delta y}\int_{I_j}{u(x,y,t^n)dy}.
\end{equation}
\end{subequations}
Note that \textit{only in this subsection} will the superscript tilde denote uniform interval averages in $y$, not nonuniform interval averages.\\
\ \\
Dimensional splitting methods are commonly used to solve two (or higher) dimensional problems, with second-order Strang splitting being a popular choice. Dimensional splitting methods solve equation \eqref{2Dscalar} by alternating between solving the easier problems
\begin{subequations}
\begin{equation}\label{Strang1}
	u_t + f(u)_x = 0,
\end{equation}
\begin{equation}\label{Strang2}
	u_t + g(u)_y = 0.
\end{equation}
\end{subequations}
Strang splitting uses $\tilde{\bar{u}}_{i,j}^n$ to solve \eqref{Strang1} over a half time step $\Delta t/2$ for intermediate solution $\tilde{\bar{u}}_{i,j}^*$; uses $\tilde{\bar{u}}_{i,j}^*$ to solve \eqref{Strang2}  over a full time step $\Delta t$ for intermediate solution $\tilde{\bar{u}}_{i,j}^{**}$; and uses $\tilde{\bar{u}}_{i,j}^{**}$ to solve \eqref{Strang1} over another half time step $\Delta t/2$ for solution $\tilde{\bar{u}}_{i,j}^{n+1}$. As such, we can reduce the two-dimensional problem to solving several (easier) one-dimensional problems. Since the one-dimensional EL-RK-FV algorithm evolves interval averages, we need a way to go between two-dimensional cell averages and one-dimensional interval averages. We note that the nonlinear weights in the WENO-AO method \cite{Balsara2016} are the same for all nodes within the same cell.

\subsubsection{Going from/to cell averages to/from interval averages}

Consider the interval averages as functions of $y$ and $x$, respectively defined by
\begin{subequations}
\begin{equation}
    \psi_i(y) \coloneqq \bar{u}_{i|y}^n = \frac{1}{\Delta x}\int_{I_i}{u(x,y,t^n)dx},
\end{equation}
\begin{equation}
    \phi_j(x) \coloneqq \tilde{u}_{j|x}^n = \frac{1}{\Delta y}\int_{I_j}{u(x,y,t^n)dy}.
\end{equation}
\end{subequations}
 For any given $i=1,2,...,N_x$ or $j=1,2,...,N_y$, consider the respective Gauss-Legendre quadrature nodes $\{x_k^{(i)}\in I_i\text{ : }k=1,...,K\}$ and $\{y_l^{(j)}\in I_j\text{ : }l=1,...,L\}$. Observing that the cell averages at time $t^n$ can be expressed as the interval averages of $\psi(y)$ and $\phi(x)$,
 \begin{equation}
     \tilde{\bar{u}}_{i,j}^n = \frac{1}{\Delta y}\int_{I_j}{\psi_i(y)dy} = \frac{1}{\Delta x}\int_{I_i}{\phi_j(x)dx},
 \end{equation}
we can use WENO-AO to go from cell averages to interval averages evaluated at the Gauss-Legendre nodes,
\begin{subequations}
\begin{equation}
	\tilde{\bar{u}}_{i,j}^n\longrightarrow\psi_i(y_l^{(j)})=\bar{u}_{i|l}^n,
\end{equation}
\begin{equation}
	\tilde{\bar{u}}_{i,j}^n\longrightarrow\phi_j(x_k^{(i)})=\tilde{u}_{j|k}^n.
\end{equation}
\end{subequations}

\noindent Algorithm 3 presents how to implement WENO-AO to go from cell averages to $x-$interval averages at the fixed $y-$Gauss-Legendre nodes. A similar algorithm can be done to go from cell averages to $y-$interval averages at the fixed $x-$Gauss-Legendre nodes.\\

\noindent\xdashthick\\
\textbf{Algorithm 3.} Going from cell averages to $x-$interval averages\\
\xdashthin\\
\textbf{Input:} uniform cell averages $\tilde{\bar{u}}_{i,j}$.\\
\textbf{Output:} uniform $x-$interval averages $\bar{u}_{i|l}$ at fixed Gauss-Legendre nodes $\{y_l^{(j)}\in I_j\text{ : }l=1,...,L\}$.\\
\par\qquad\textbf{do $i=1,2,...,N_x$}
\par\qquad\qquad \textbf{do $j=1,2,...,N_y$}
\par\qquad\qquad\qquad Calculate the WENO-AO reconstruction polynomial $\mathcal{R}_j^{(i)}(y\in I_j)$ in terms of the
\par\qquad\qquad\qquad neighboring averages $\tilde{\bar{u}}_{i,k}$, $k=j-p,...,j+q$.
\par\qquad\qquad\qquad\textbf{do $l=1,...,L$}
\par\qquad\qquad\qquad\qquad Store $\bar{u}_{i|l} = \psi_i(y_l^{(j)}) \approx \mathcal{R}_j^{(i)}(y_l^{(j)})$.
\par\qquad\qquad\qquad\textbf{end do}
\par\qquad\qquad\textbf{end do}
\par\qquad\textbf{end do}
\par\qquad The uniform $x-$interval averages at a fixed Gauss-Legendre node $y_l^{(j)}$ are $\{\bar{u}_{i|l} = \psi_i(y_l^{(j)})\text{ : }l=1,2,...,N_x\}$.\\
\xdashthick
\\
\ \\
Computing the cell averages from the interval averages at the Gauss-Legendre nodes is straightforward using a Gaussian quadrature. Let $\xi$ and $w$ denote the standard Gauss-Legendre nodes and weights over the interval $[-1,1]$, respectively. Without loss of generality, we can go from $x-$interval averages to cell averages.
\begin{align}
\begin{split}
    \label{intervaltocell1}
	\tilde{\bar{u}}_{i,j} &= \frac{1}{\Delta y}\int_{I_j}{\psi(y)dy}\\
	&= \frac{1}{2}\int_{-1}^{1}{\psi\Big(y_{j-\frac{1}{2}}+\frac{\Delta y}{2}(y'+1)\Big)dy'}\\
	&\approx \frac{1}{2}\sum\limits_{l=1}^{L}{w_l\psi\Big(y_{j-\frac{1}{2}}+\frac{\Delta y}{2}(\xi_l+1)\Big)}\\
	&= \frac{1}{2}\sum\limits_{l=1}^{L}{w_l\psi(y_l^{(j)})},\qquad\text{where $y_l^{(j)}=y_{j-\frac{1}{2}}+\frac{\Delta y}{2}(\xi_l+1)$.}
\end{split}
\end{align}

\subsubsection{Strang splitting}

For demonstrative purposes, we outline the EL-RK-FV algorithm for two-dimensional problems via Strang splitting. Since Strang splitting is only second-order in time, higher-order splitting methods might be preferred. We present the fourth-order splitting method developed by Forest and Ruth \cite{Forest1990} and Yoshida \cite{Yoshida1990} in Appendix B; Rossmanith and Seal used this fourth-order splitting method in the semi-Lagrangian framework in \cite{Rossmanith2011}.\\

\noindent\xdashthick\\
\textbf{Algorithm 4.} Strang splitting for the EL-RK-FV method\\
\xdashthin\\
\textbf{Input:} uniform cell averages $\tilde{\bar{u}}_{i,j}^n$.\\
\textbf{Output:} uniform cell averages $\tilde{\bar{u}}_{i,j}^{n+1}$.\\
\par\qquad\textbf{Step 1 ($x$-direction).} Solve equation \eqref{Strang1} for a half time step $\Delta t/2$; that is, over $[t^n,t^{n+1/2}]$.
\par\qquad Use Algorithm 3 to get $x-$interval averages; that is, $\tilde{\bar{u}}_{i,j}^n\longrightarrow\bar{u}_{i|l}^n$.
\par\qquad\textbf{do $l=1,2,...,N_y\cdot L$}
\par\qquad\qquad For each Gauss-Legendre node, update the $x-$interval averages by applying the 1D algorithm;
\par\qquad\qquad that is, $\bar{u}_{i|l}^n\longrightarrow\bar{u}_{i|l}^*$.
\par\qquad\textbf{end do}
\par\qquad Use equation \eqref{intervaltocell1} to get updated cell averages; that is, $\bar{u}_{i|l}^*\longrightarrow\tilde{\bar{u}}_{i,j}^*$.\\

\par\qquad\textbf{Step 2 ($y$-direction).} Solve equation \eqref{Strang2} for a full time step $\Delta t$; that is, over $[t^n,t^{n+1}]$.
\par\qquad Use Algorithm 3 analogue to get $y-$interval averages; that is, $\tilde{\bar{u}}_{i,j}^*\longrightarrow\tilde{u}_{j|k}^*$.
\par\qquad\textbf{do $k=1,2,...,N_x\cdot K$}
\par\qquad\qquad For each Gauss-Legendre node, update the $y-$interval averages by applying the 1D algorithm;
\par\qquad\qquad that is, $\tilde{u}_{j|k}^{*}\longrightarrow\tilde{u}_{j|k}^{**}$.
\par\qquad\textbf{end do}
\par\qquad Use equation \eqref{intervaltocell1} analogue to get updated cell averages; that is, $\tilde{u}_{j|k}^{**}\longrightarrow\tilde{\bar{u}}_{i,j}^{**}$.\\

\par\qquad\textbf{Step 3 ($x$-direction).} Solve equation \eqref{Strang1} for a half time step $\Delta t/2$; that is, over $[t^{n+1/2},t^{n+1}]$.
\par\qquad Use Algorithm 3 to get $x-$interval averages; that is, $\tilde{\bar{u}}_{i,j}^{**}\longrightarrow\bar{u}_{i|l}^{**}$.
\par\qquad\textbf{do $l=1,2,...,N_y\cdot L$}
\par\qquad\qquad For each Gauss-Legendre node, update the $x-$interval averages by applying the 1D algorithm;
\par\qquad\qquad that is, $\bar{u}_{i|l}^{**}\longrightarrow\bar{u}_{i|l}^{n+1}$.
\par\qquad\textbf{end do}
\par\qquad Use equation \eqref{intervaltocell1} to get updated cell averages; that is, $\bar{u}_{i|l}^{n+1}\longrightarrow\tilde{\bar{u}}_{i,j}^{n+1}$.
\\
\xdashthick


\section{The EL-RK-FV method for convection-diffusion equations}

Throughout this section, overlines (e.g., $\overline{u}_j(t)$) denote \textit{uniform} cell averages, and tildes (e.g., $\tilde{u}_j(t)$) denote \textit{nonuniform} cell averages. We discuss the EL-RK-FV algorithm for the convection-diffusion equation \eqref{CDeqn}. Since we use dimensional splitting methods for two-dimensional problems, it suffices to discuss the 1D EL-RK-FV algorithm for problems of the form
\begin{equation}
    \label{1Dconvectiondiffusion}
    u_t + f(u)_x = \epsilon u_{xx} + g(x,t),
\end{equation}
where we impose periodic boundary conditions. Rewriting equation \eqref{1Dconvectiondiffusion} in divergence form, integrating over the same space-time region $\Omega_j$ as in the $\epsilon=0$ case, applying the divergence theorem and fundamental theorem of calculus, integrating over $[t^n,t^{n+1}]$, and converting to the time-differential form, we get the semi-discrete formulation
\begin{equation}
    \label{semidiscrete3}
    \frac{d}{dt}\int_{\tilde{I}_j(t)}{u(x,t)dx} = -\left[\hat{F}_{j+\frac{1}{2}}(t)-\hat{F}_{j-\frac{1}{2}}(t)\right] + \epsilon\int_{\tilde{I}_j(t)}{u_{xx}(x,t)dx} + \int_{\tilde{I}_j(t)}{g(x,t)dx},
\end{equation}
where $\hat{F}_{j+\frac{1}{2}}(t)$ is any monotone numerical flux (e.g., Lax-Friedrichs flux) and $F_{j+\frac{1}{2}}(t)$ is the modified flux function defined in \eqref{semidiscrete1}.

\subsection{Computing the uniform cell averages of $u_{xx}$}

When evolving equation \eqref{semidiscrete3} we will need to compute the uniform cell averages of $u_{xx}(x,t)$, defined by
\begin{equation}
    \overline{u}_{xx,j}(t)\coloneqq \frac{1}{\Delta x}\int_{I_j}{u_{xx}(x,t)dx}.
\end{equation}
We use the centered five-point stencil $\{j-2,j-1,j,j+1,j+2\}$ for linear reconstruction. Let $P(x\in I_j)$ be the linear reconstruction polynomial over the centered five-point stencil. After some tedious algebra, we find that the uniform cell averages $\overline{u}_{xx,j}(t)$ can be expressed in terms of the uniform cell averages $\overline{u}_j(t)$ with fourth-order accuracy using the equation
\begin{equation}
    \label{5stencil}
    \overline{u}_{xx,j}(t) = \frac{1}{\Delta x^2}
    \begin{bmatrix*}
        -\frac{1}{12}&\frac{4}{3}&-\frac{5}{2}&\frac{4}{3}&-\frac{1}{12}
    \end{bmatrix*}
    \begin{bmatrix*}
        \overline{u}_{j-2}(t)\\
        \overline{u}_{j-1}(t)\\
        \overline{u}_{j}(t)\\
        \overline{u}_{j+1}(t)\\
        \overline{u}_{j+2}(t)
    \end{bmatrix*}.
\end{equation}

\noindent For implementation it is easier to express equation \eqref{5stencil} in matrix form (assuming periodic or zero boundary conditions). We denote this matrix $\mathbf{D}_4$ in Algorithm 5. Computing the nonuniform cell averages $\tilde{u}_{xx,j}(t)$ can now be done using Algorithm 1.

\subsection{Time evolution with implicit-explicit Runge-Kutta methods}

We can split equation \eqref{1Dconvectiondiffusion} into its non-stiff and stiff terms,
\begin{equation}
    \label{FG}
    u_t = \mathcal{F}(u;x,t) + \mathcal{G}(u;x,t),
\end{equation}
where $\mathcal{F}(u;x,t)=-f(u)_x$ is the non-stiff convective term, and $\mathcal{G}(u;x,t)=\epsilon u_{xx}+g(x,t)$ is the stiff diffusive (and source) term. Discretization methods used for such problems are implicit-explicit (IMEX) RK schemes \cite{Ascher1997,Conde2017,Higueras2014}. The intuition behind these schemes is straightforward -- evolve the non-stiff term explicitly, and evolve the stiff term implicitly. As such, each stage in the RK method will involve explicitly evaluating the non-stiff term, and solving a linear system due to the stiff term.\\
\ \\
All that's needed are the possibly nonuniform cell averages at each RK stage $\mu=1,...,s$ over the space-time regions $\Omega_j$. There are two approaches one can take to approximate the possibly nonuniform cell averages at each RK stage: (1) have a single space-time partition for multiple RK stages and directly update the solution to the possibly nonuniform traceback grid at each intermediate RK stage, or (2) have multiple space-time partitions for intermediate RK stages, and compute the uniform cell averages at time $t^{(\mu)}$ and project them onto the possibly nonuniform traceback grid via Algorithm 1. Although the first approach is more intuitive, it is computationally expensive since the system to be solved (from the implicit part) will depend on the measure of each cell. We choose the second approach, as computing the uniform cell averages at time $t^{(\mu)}$ requires solving a system dependent only on the background uniform mesh size $\Delta x$. Since we choose the second approach, we will need to: (1) solve for the \textit{uniform} cell averages at each consecutive stage, and (2) project the uniform cell averages onto the possibly nonuniform traceback grid via Algorithm 1. Recall the uniform cell averages $\overline{u}_{xx,j}^{(\mu)}$ can be computed from the uniform cell averages $\overline{u}_{j}^{(\mu)}$ using equation \eqref{5stencil}.\\
\ \\
For simplicity, we only use the IMEX RK schemes outlined in \cite{Ascher1997}. As per the notation used by Ascher, et al., \textit{IMEX($s$,$\sigma$,$p$)} denotes using an $s-$stage diagonally-implicit Runge-Kutta (DIRK) scheme for $\mathcal{G}(u;x,t)$, using a $\sigma-$stage explicit RK scheme for $\mathcal{F}(u;x,t)$, and being of combined order $p$. Consider the semi-discrete formulation \eqref{semidiscrete3} rewritten as
\begin{equation}
    \frac{d}{dt}\int_{\tilde{I}_j(t)}{u(x,t)dx} = \mathcal{F}(u;t) + \mathcal{G}(u;t),
\end{equation}
where we redefine $\mathcal{F}(u;t)=-\left[\hat{F}_{j+\frac{1}{2}}(t)-\hat{F}_{j-\frac{1}{2}}(t)\right]$ and $\mathcal{G}(u;t) =  \epsilon\int_{\tilde{I}_j(t)}{u_{xx}(x,t)dx} + \int_{\tilde{I}_j(t)}{g(x,t)dx}$. The IMEX($s$,$\sigma$,$p$) RK schemes are expressed with two Butcher tables: one for the implicit RK method, and another for the explicit RK method.
\begin{table}[h!]
    \centering
    \begin{minipage}[b]{0.49\linewidth}
    \centering
    \caption*{Implicit Scheme}
    \begin{tabular}{c|lllll}
        0&0&0&0&$\hdots$&0\\
        $c_1$&0&$a_{11}$&0&$\hdots$&0\\
        $c_2$&0&$a_{21}$&$a_{22}$&$\hdots$&0\\
        $\vdots$&$\vdots$&$\vdots$&$\vdots$&$\ddots$&$\vdots$\\
        $c_s$&0&$a_{s1}$&$a_{s2}$&$\hdots$&$a_{ss}$\\
        \hline
        &0&$b_1$&$b_2$&$\hdots$&$b_s$
        \end{tabular}
    \end{minipage}
    \begin{minipage}[b]{0.49\linewidth}
    \centering
    \caption*{Explicit Scheme}
    \begin{tabular}{c|lllll}
        0&0&0&0&$\hdots$&0\\
        $c_1$&$\hat{a}_{21}$&0&0&$\hdots$&0\\
        $c_2$&$\hat{a}_{31}$&$\hat{a}_{32}$&0&$\hdots$&0\\
        $\vdots$&$\vdots$&$\vdots$&$\vdots$&$\ddots$&$\vdots$\\
        $c_s$&$\hat{a}_{\sigma 1}$&$\hat{a}_{\sigma 2}$&$\hat{a}_{\sigma 3}$&$\hdots$&0\\
        \hline
        &$\hat{b}_1$&$\hat{b}_2$&$\hat{b}_3$&$\hdots$&$\hat{b}_{\sigma}$
        \end{tabular}
    \end{minipage}
\end{table}

\noindent Defining $U^{(\mu)}\coloneqq\int_{\tilde{I}_j(t^{(\mu)})}{u(x,t^{(\mu)})dx}$, the IMEX($s$,$\sigma$,$p$) scheme over the space-time region $\Omega_j$ is as follows:
\begin{subequations}
\begin{align}
\begin{split}
\label{IMEX}
    U^{n+1} &= U^n + \Delta t\sum\limits_{\mu=1}^{s}{b_{\mu}K_{\mu}} + \Delta t\sum\limits_{\mu=1}^{\sigma}{\hat{b}_{\mu}\hat{K}_{\mu}},
\end{split}\\
\begin{split}
\label{K}
    K_{\mu} &= \mathcal{G}(U^{(\mu)};t^{(\mu)}),\qquad \mu=1,2,...,s,
\end{split}\\
\begin{split}
\label{Khat1}
    \hat{K}_1 &= \mathcal{F}(U^n;t^n),
\end{split}\\
\begin{split}
\label{Khat}
    \hat{K}_{\mu+1} &= \mathcal{F}(U^{(\mu)};t^{(\mu)}),\qquad \mu=1,2,...,s.
\end{split}
\end{align}
\end{subequations}

\noindent More precisely,
\begin{subequations}
\begin{align}
\begin{split}
    K_{\mu} &= \epsilon\int_{\tilde{I}_j(t^{(\mu)})}{u_{xx}(x,t^{(\mu)})dx} + \int_{\tilde{I}_j(t^{(\mu)})}{g(x,t^{(\mu)})dx},
\end{split}\\
\begin{split}
    \hat{K}_{\mu+1} &= -\left[\hat{F}_{j+\frac{1}{2}}(t^{(\mu)}) - \hat{F}_{j-\frac{1}{2}}(t^{(\mu)})\right].
\end{split}
\end{align}
\end{subequations}
Based on the IMEX RK method, the solution $U^{(\mu)}$ over the traceback grid can be approximated by
\begin{equation}
\label{Um}
    U^{(\mu)} = U^{n} + \Delta t\sum\limits_{\nu=1}^{\mu}{a_{\mu,\nu}K_{\nu}} + \Delta t\sum\limits_{\nu=1}^{\mu}{\hat{a}_{\mu+1,\nu}\hat{K}_{\nu}},\qquad \mu=1,2,...,s.
\end{equation}
Recall that we choose \textit{not} to directly update the solution over $\Omega_j$. Instead, we opt to solve for the uniform cell averages at each RK stage and project them onto the corresponding possibly nonuniform traceback grid. We define sub-space-time regions ${}_{\mu}\Omega_j$ for the $\mu$-th stage of the IMEX RK scheme, as shown in Figures \ref{spacetime1} and \ref{spacetime2}. The space-time region ${}_{\mu}\Omega_j$ traces back to time $t^n$ (using the same approximate characteristic speeds as in $\Omega_j$) from time $t^{(\mu)}$. Hence, at time $t^{(\mu)}$ on the sub-space-time region ${}_{\mu}\Omega_j$ the grid is uniform. Lower left subscript $\mu$ denotes values confined to the sub-space-time region ${}_{\mu}\Omega_j$. For example, ${}_{2}U^n$ are the possibly nonuniform definite integrals (and hence the possibly nonuniform cell averages) at time $t^n$ over the traceback cell in ${}_{2}\Omega_j$, as seen in Figure \ref{spacetime2}.

\begin{figure}[h!]
\begin{minipage}[b]{0.49\linewidth}
    \centering
	\includegraphics[width=\textwidth]{./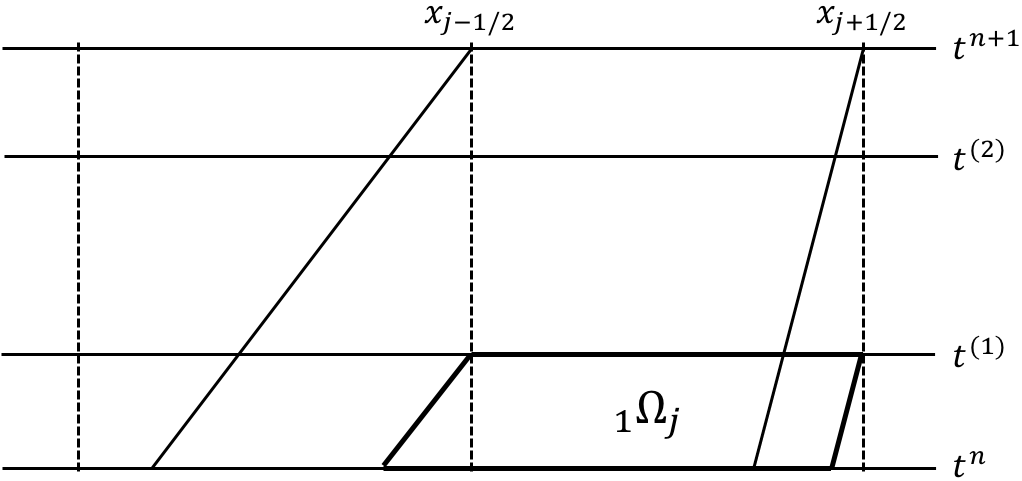}
	\caption{The space-time region ${}_{1}\Omega_j$.}
	\label{spacetime1}
\end{minipage}
\begin{minipage}[b]{0.49\linewidth}
    \centering
	\includegraphics[width=\textwidth]{./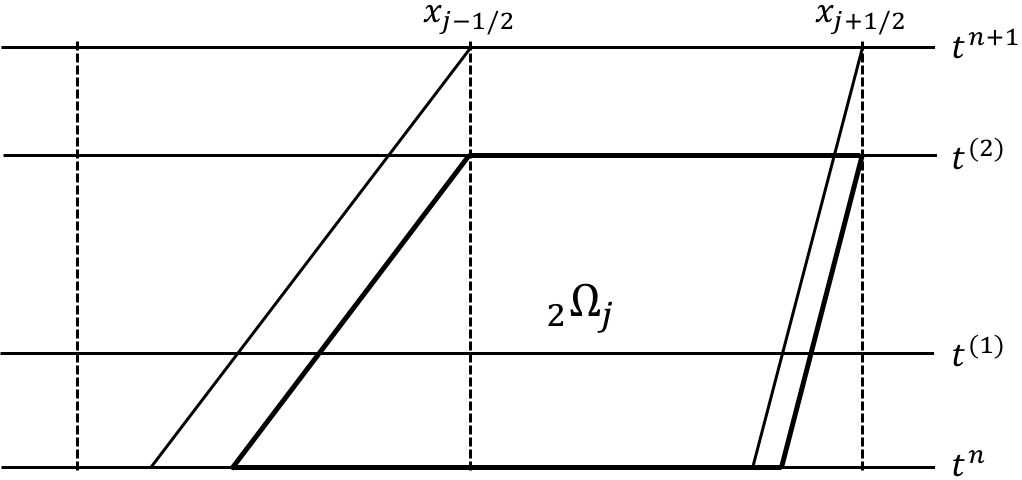}
	\caption{The space-time region ${}_{2}\Omega_j$.}
	\label{spacetime2}
\end{minipage}
\end{figure}

\noindent The desired uniform cell averages $\overline{u}_j^{(\mu)} = {}_{\mu}U^{(\mu)}/\Delta x$ at stage $\mu$ can be obtained by computing the first $\mu-$stages of the IMEX RK scheme over the sub-space-time region ${}_{\mu}\Omega_j$, given below.
\begin{subequations}
\label{IMEXsubstage}
\begin{align}
\begin{split}
    {}_{\mu}U^{(\mu)}& = {}_{\mu}U^n + \Delta t\sum\limits_{\nu=1}^{\mu}{(a_{\mu,\nu})({}_{\mu}K_{\nu})} + \Delta t\sum\limits_{\nu=1}^{\mu}{(\hat{a}_{\mu+1,\nu})({}_{\mu}\hat{K}_{\nu})}
\end{split}\\
\begin{split}
    {}_{\mu}K_{\nu} &= \mathcal{G}({}_{\mu}U^{(\nu)};t^{(\nu)}),\qquad \nu=1,2,...,\mu,
\end{split}\\
\begin{split}
    {}_{\mu}\hat{K}_1 &= \mathcal{F}({}_{\mu}U^n;t^n),
\end{split}\\
\begin{split}
 {}_{\mu}\hat{K}_{\nu+1} &= \mathcal{F}({}_{\mu}U^{(\nu)};t^{(\nu)}),\qquad \nu=1,2,...,\mu.
\end{split}
\end{align}
\end{subequations}
\noindent More precisely,
\begin{subequations}
\begin{align}
\begin{split}
    {}_{\mu}K_1 &= \epsilon\int_{{}_{\mu}\tilde{I}_j(t^{(\nu)})}{u_{xx}(x,t^{(\nu)})dx} + \int_{{}_{\mu}\tilde{I}_j(t^{(\nu)})}{g(x,t^{(\nu)})dx},
\end{split}\\
\begin{split}
    {}_{\mu}\hat{K}_{\nu+1} &= -\left[\hat{F}_{j+\frac{1}{2}}(t^{(\nu)}) - \hat{F}_{j-\frac{1}{2}}(t^{(\nu)})\right].
\end{split}
\end{align}
\end{subequations}
Note that ${}_{\mu}\hat{K}_{\nu+1}$ uses the cell averages restricted to the sub-space-time regions ${}_{\mu}\Omega_j$. Further note that equation \eqref{IMEXsubstage} can recycle and reuse the uniform cell averages already computed during stages $1,2,...,\mu-1$. Simply project the uniform cell averages ($\overline{u}_j^{(\nu)}$ or $\overline{u}_{xx,j}^{(\nu)}$, $\nu=1,2,...,\mu-1$) onto the traceback grids formed by the space-time regions ${}_{\mu}\Omega_{j}$. Although IMEX RK schemes are straightforward, it is easy to get lost in the notation. To help demonstrate the EL-RK-FV algorithm coupled with an IMEX RK scheme, we present the IMEX(2,2,2) case in Appendix D. The Butcher tables for other IMEX RK schemes are included in Appendix C. There are IMEX SSP RK schemes that can be found in \cite{Conde2017,Higueras2014}.\\
\clearpage
\noindent\xdashthick\\
\textbf{Algorithm 5.} EL-RK-FV algorithm coupled with an IMEX RK scheme \cite{Ascher1997}, $IMEX(s,\sigma,p)$\\
\xdashthin\\
\textbf{Input:} uniform cell averages $\overline{u}_j^n$.\\
\textbf{Output:} uniform cell averages $\overline{u}_j^{n+1}$.\\
\par\qquad Compute the possibly nonuniform traceback cell averages $\tilde{u}_{j}(t^n)$ using Algorithm 1.
\par\qquad Store $\hat{K}_1 = \mathcal{F}(U^n;t^n)$. This is the traceback grid in $\Omega_j$.
\par\qquad\textbf{do $\mu=1,2,...,s$}
\par\qquad\qquad 1. Compute the uniform cell averages $\overline{u}_j^{(\mu)}$ at time $t^{(\mu)}$ with equation \eqref{IMEXsubstage} by:
\par\qquad\qquad\qquad i. Restrict yourself to the space-time region ${}_{\mu}\Omega_j$.
\par\qquad\qquad\qquad ii. Compute the values ${}_{\mu}K_{\nu}$ and ${}_{\mu}\hat{K}_{\nu+1}$ at each stage $\nu=1,2,...,\mu-1$ using Algorithm 1
\par\qquad\qquad\qquad to compute the necessary nonuniform cell averages, and Algorithm 2 for the modified flux
\par\qquad\qquad\qquad function. Definite integrals of $g(x,t)$ can be computed using a Gaussian quadrature.
\par\qquad\qquad\qquad iii. Recalling equation \eqref{5stencil}, solve equation \eqref{IMEXsubstage} by setting it up as the linear system
\[\left(\mathbf{I} - \frac{a_{\mu,\mu}\epsilon\Delta t}{\Delta x^2}\mathbf{D}_4\right){}_{\mu}\vv{U}^{(\mu)} = {}_{\mu}\vv{U}^n + \Delta t\sum\limits_{\nu=1}^{\mu-1}{(a_{\mu,\nu})({}_{\mu}\vv{K}_{\nu})} + \Delta t\sum\limits_{\nu=1}^{\mu}{(\hat{a}_{\mu+1,\nu})({}_{\mu}\vv{\hat{K}}_{\nu})} + a_{\mu,\mu}\Delta t\vv{g}(x,t^{(\mu)}),\]
\par\qquad\qquad\qquad where $\vv{g}_j(x,t^{(\mu)}) = \int_{I_j}{g(x,t^{(\mu)})dx}$.
\par\qquad\qquad\qquad iv. Store the uniform cell averages $\overline{u}_j^{(\mu)} = {}_{\mu}U_j^{(\mu)}/\Delta x$.
\par\qquad\qquad 2. Compute and store the uniform cell averages $\overline{u}_{xx,j}^{(\mu)}$ using equation \eqref{5stencil}.
\par\qquad\qquad 3. Compute the possibly nonuniform traceback cell averages $\tilde{u}_j^{(\mu)}$ using Algorithm 1.
\par\qquad\qquad 4. Compute the possibly nonuniform traceback cell averages $\tilde{u}_{xx,j}^{(\mu)}$ using Algorithm 1.
\par\qquad\qquad 5. Compute and store $K_{\mu}$ and $\hat{K}_{\mu+1}$. Note that we are back on the possibly nonuniform
\par\qquad\qquad traceback grid consisting of cells $\tilde{I}_j(t^{(\mu)})$.
\par\qquad\textbf{end do}\\
\par\qquad Compute $U^{n+1} = U^n + \Delta t\sum\limits_{\mu=1}^{s}{b_{\mu}K_{\mu}} + \Delta t\sum\limits_{\mu=1}^{\sigma}{\hat{b}_{\mu}\hat{K}_{\mu}}$.
\par\qquad Compute the uniform cell averages $\overline{u}_j^{n+1} = U_j^{n+1}/\Delta x$.\\
\xdashthick

\subsection{Mass conservation}

We now show that the 1D EL-RK-FV algorithm is mass conservative when coupled with any IMEX RK scheme in \cite{Ascher1997}. Since we extend the EL-RK-FV algorithm to higher dimensions via dimensional splitting in this paper, showing mass conservation for the 1D problem suffices. Note that mass conservation of the $\epsilon=0$ case can be proved just as easily.

\begin{prop}
(Mass conservative). The 1D EL-RK-FV algorithm coupled with any IMEX RK scheme in \cite{Ascher1997} for (non)linear convection-diffusion equations is mass conservative, assuming the source term $g(x,t)=0$ and periodic boundary conditions.
\end{prop}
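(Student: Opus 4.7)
The plan is to track the total mass $M^n := \sum_{j=1}^{N}\Delta x\,\overline u_j^n$ through one step of Algorithm 5 and conclude $M^{n+1} = M^n$. The two workhorses will be: (a) the traceback cells $\{\tilde I_j(t)\}_j$ share endpoints $\tilde x_{j+1/2}(t)$ by construction and therefore partition $[a,b]$, and each WENO-AO reconstruction polynomial preserves its own cell average, $\int_{I_i}\mathcal R_i(x)\,dx = \Delta x\,\overline u_i$; together these imply that the integral reconstruction of Algorithm 1 is globally mass conservative, i.e.\ $\sum_j \Delta\tilde x_j(t)\,\tilde u_j(t) = \sum_i \Delta x\,\overline u_i(t)$. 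And (b) periodicity makes boundary contributions telescope, either through the numerical fluxes or through cyclic index shifts in the symmetric five-point stencil \eqref{5stencil}.

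First I would observe that $\tilde I_j(t^{n+1}) = I_j$, so $U_j^{n+1} = \Delta x\,\overline u_j^{n+1}$ and $M^{n+1} = \sum_j U_j^{n+1}$. Summing the IMEX update \eqref{IMEX} over $j$ yields
\[
M^{n+1} \;=\; \sum_j U_j^{n} \;+\; \Delta t\sum_{\mu=1}^{s} b_\mu\sum_j K_{\mu,j} \;+\; \Delta t\sum_{\mu=1}^{\sigma}\hat b_\mu \sum_j \hat K_{\mu,j}.
\]
Applying (a) at $t=t^n$ to $U_j^n = \Delta x_j^*\,\tilde u_j^n$ (obtained via Algorithm 1 from $\overline u_j^n$) gives $\sum_j U_j^n = M^n$, so the claim reduces to showing $\sum_j \hat K_{\mu,j} = 0$ and $\sum_j K_{\mu,j} = 0$ at every stage.

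The explicit contribution is immediate: each $\hat K_{\mu,j}$ has the form $-[\hat F_{j+1/2}(\cdot) - \hat F_{j-1/2}(\cdot)]$, so its sum over $j$ telescopes to zero by periodicity. For the implicit contribution with $g\equiv 0$, $K_{\mu,j} = \epsilon\int_{\tilde I_j(t^{(\mu)})}u_{xx}(x,t^{(\mu)})\,dx$ is evaluated in the algorithm as $\epsilon\,\Delta\tilde x_j(t^{(\mu)})\,\tilde u_{xx,j}^{(\mu)}$, where the nonuniform averages $\tilde u_{xx,j}^{(\mu)}$ come from Algorithm 1 applied to the uniform averages $\overline u_{xx,j}^{(\mu)}$. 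Ingredient (a) then reduces $\sum_j K_{\mu,j}$ to $\epsilon\,\Delta x\sum_i \overline u_{xx,i}^{(\mu)}$, and summing the five-point formula \eqref{5stencil} over $i$ with cyclic index shifts assigns to each $\overline u_k^{(\mu)}$ the coefficient $-\tfrac{1}{12} + \tfrac{4}{3} - \tfrac{5}{2} + \tfrac{4}{3} - \tfrac{1}{12} = 0$; hence $\sum_i \overline u_{xx,i}^{(\mu)} = 0$ and $\sum_j K_{\mu,j} = 0$.

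Assembling these pieces gives $M^{n+1} = M^n + 0 + 0 = M^n$. The step I expect to be most delicate is the substage bookkeeping: the sub-space-time regions ${}_\mu\Omega_j$ produce uniform averages $\overline u_j^{(\mu)}$ via a linear solve on the background grid, and those averages are projected back to $\tilde I_j(t^{(\mu)})$ via Algorithm 1 before being used to form $K_{\mu,j}$. I need ingredient (a) to hold verbatim for the grids induced by ${}_\mu\Omega_j$, but this is automatic since those grids are defined by the same endpoint-sharing approximate characteristics. The actual mass-counting is then just telescoping in two different guises.
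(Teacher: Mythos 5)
Your argument is correct, and it is worth noting that it operates at a different level than the paper's own proof. The paper argues at the semi-discrete/time-integral level: it writes $\int_{I_j}u(x,t^{n+1})\,dx$ as $\int_{\tilde I_j(t^n)}u(x,t^n)\,dx$ minus a time integral of the flux differences and of $\epsilon\int_{\tilde I_j(t)}u_{xx}\,dx=\epsilon\bigl(u_x(\tilde x_{j+\frac12}(t),t)-u_x(\tilde x_{j-\frac12}(t),t)\bigr)$, and then sums over $j$ so that both interface terms telescope under periodicity while the traceback integrals reassemble into $\int_a^b u(x,t^n)\,dx$. That proof never touches the RK quadrature, Algorithm 1, or the discrete Laplacian \eqref{5stencil}; it implicitly assumes those ingredients do not destroy conservation. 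You instead prove conservation of the algorithm as implemented: you sum the actual Butcher-table update \eqref{IMEX} over $j$, and you supply the two discrete facts the paper leaves unstated --- that WENO-AO reconstruction preserves its own cell average so the remapping in Algorithm 1 conserves total mass over the endpoint-sharing traceback partition, and that the five-point stencil coefficients in \eqref{5stencil} sum to zero so $\sum_i\overline u_{xx,i}^{(\mu)}=0$ under cyclic index shifts. The skeleton (domain partition by traceback cells plus telescoping of interface quantities under periodicity) is the same in both arguments, but your version is the stronger statement, since it shows that $\sum_j K_{\mu,j}=0$ and $\sum_j\hat K_{\mu,j}=0$ hold stage by stage regardless of what the intermediate uniform averages $\overline u_j^{(\mu)}$ turn out to be, so no conservation property of the substage linear solves is ever needed. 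The only hypothesis you share with the paper and should state explicitly is that the traceback nodes do not cross, so that $\{\tilde I_j(t)\}_j$ genuinely partitions the periodic domain.
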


\begin{proof}
Making use of the semi-discrete form of the convection-diffusion equation,
\begin{align}
\begin{split}
    \int_{I_j}{u(x,t^{n+1})dx} &= \int_{\tilde{I}_j(t^n)}{u(x,t^n)dx} - \int_{t^n}^{t^{n+1}}{\left\{\hat{F}_{j+\frac{1}{2}}(t)-\hat{F}_{j-\frac{1}{2}}(t) - \epsilon\int_{\tilde{I}_j(t)}{u_{xx}(x,t)dx}\right\}dt}\\
    &= \int_{\tilde{I}_j(t^n)}{u(x,t^n)dx} - \int_{t^n}^{t^{n+1}}{\left\{\hat{F}_{j+\frac{1}{2}}(t)-\hat{F}_{j-\frac{1}{2}}(t) - \epsilon\left(u_x(\tilde{x}_{j+\frac{1}{2}}(t),t) - u_x(\tilde{x}_{j-\frac{1}{2}}(t),t)\right)\right\}dt}.
\end{split}
\end{align}
Summing over all $j=1,2,...,N_x$ and making use of the periodic boundary conditions,
\begin{equation}
    \int_a^b{u(x,t^{n+1})dx} = \int_a^b{u(x,t^n)dx}.
\end{equation}
\end{proof}


\section{Numerical tests}

In this section, we present results applying the proposed EL-RK-FV algorithm to various benchmark problems. In particular, we include error tables and error plots to investigate the spatial and temporal convergence of the algorithm. Mass conservation is also numerically verified by applying the proposed algorithm to the 0D2V (zero dimensions in physical space and two dimensions in velocity space) Leonard-Bernstein Fokker-Planck equation. We assume a uniform mesh, apply WENO-AO(5,3) in Algorithms 1 and 2, use three Gauss-Legendre nodes in Algorithm 3, and use the fourth-order approximation given by equation \eqref{5stencil} for the diffusive term. Unless otherwise stated, for the time-stepping we use the fourth-order RK method for pure convection problems, and IMEX(2,3,3) for convection-diffusion equations. We also use second-order Strang splitting for two-dimensional convection-diffusion equations. Although higher-order splitting methods can be used for pure convection problems, it is well-known that negative time integration can lead to significant instabilities when dealing with a diffusive term.\\

\noindent There are three sources of error: spatial approximation, time-stepping, and splitting. Depending on the CFL number and test problem, these three sources of error will influence the observed order of convergence. We compute the $L^1$, $L^2$, and $L^{\infty}$ errors (in one-dimension),
\begin{subequations}
\begin{equation}
    \norm{\overline{u}-\overline{u}_{exact}}_1 = \Delta x\sum\limits_{j=1}^{N_x}{|\overline{u}_j-\overline{u}_{exact,j}|}
\end{equation}
\begin{equation}
    \norm{\overline{u}-\overline{u}_{exact}}_2 = \Delta x\sqrt{\sum\limits_{j=1}^{N_x}{|\overline{u}_j-\overline{u}_{exact,j}|^2}}
\end{equation}
\begin{equation}
    \norm{\overline{u}-\overline{u}_{exact}}_{\infty} = \max\limits_{1\leq j\leq N_x}{|\overline{u}_j-\overline{u}_{exact,j}|}
\end{equation}
\end{subequations}

\noindent Note that for the norms defined above, $\norm{\overline{u}-\overline{u}_{exact}}_1\leq  |\mathcal{D}|\norm{\overline{u}-\overline{u}_{exact}}_{\infty}$.


\subsection{Pure convection problems: one-dimensional tests}

\begin{example}(1D transport with constant coefficient)
\begin{equation}
    \label{1Dtransport_constant}
    u_t + u_x = 0,\qquad x\in[0,2\pi]
\end{equation}
with periodic boundary conditions and exact solution $u(x,t)=\sin{(x-t)}$. The errors provided in Table \ref{1Dtransport_constant_table} verify the convergence of the EL-RK-FV method when using WENO-AO(5,3) and forward Euler. As expected, we see fifth-order convergence despite the large CFL number. There is no temporal error for the convective part since the characteristics are traced exactly and hence $F_{j+\frac{1}{2}}(t) = 0$ for all $t\in[t^n,t^{n+1}]$ and $j=1,2,...,N_x$.
\end{example}

\begin{table}[h!] 
\begin{center}
\caption{Convergence study with spatial mesh refinement for equation \eqref{1Dtransport_constant} with forward Euler at $T_f=1$.}
\label{1Dtransport_constant_table}
\begin{tabular}{|*{7}{c|}} 
\hline 
\multicolumn{7}{|c|}{$CFL=8$}\\
\hline
$N_x$&$L^1$ Error&Order&$L^2$ Error&Order&$L^{\infty}$ Error&Order\\
\hline
50&1.09E-08&-&4.83E-09&-&2.86E-09&-\\
100&3.34E-10&5.03&1.48E-10&5.03&8.34E-11&5.10\\
200&9.86E-12&5.08&4.37E-12&5.08&2.51E-12&5.06\\
400&2.80E-13&5.14&1.43E-13&4.94&1.91E-13&3.72\\
\hline
\end{tabular} 
\end{center} 
\end{table}

\begin{example}(1D transport with variable coefficient in space)
\begin{equation}
    \label{1Dtransport_variablespace}
    u_t + (\sin{(x)}u)_x = 0,\qquad x\in[0,2\pi]
\end{equation}
with periodic boundary conditions and exact solution
\[u(x,t) = \frac{\sin{(2\arctan{(e^{-t}\tan{(x/2)})})}}{\sin{(x)}}.\]
As seen in Table \ref{1Dtransport_variablespace_table}, we observe the high-order convergence. As the CFL number (and hence the time step) increases, the temporal error starts to play more of a role, as evidenced by the fourth-order convergence. We verify the high-order temporal convergence in Figure \ref{1Dtransport_variablespace_plot} by fixing the mesh $N_x=400$ and varying the CFL from 0.2 to 20.
\end{example}

\begin{table}[h!] 
\begin{center}
\caption{Convergence study with spatial mesh refinement for equation \eqref{1Dtransport_variablespace} with RK4 at $T_f=1$.}
\label{1Dtransport_variablespace_table}
\begin{tabular}{|*{7}{c|}} 
\hline
\multicolumn{7}{|c|}{$CFL=0.3$}\\
\hline
$N_x$&$L^1$ Error&Order&$L^2$ Error&Order&$L^{\infty}$ Error&Order\\
\hline
50&2.76E-04&-&2.53E-04&-&3.42E-04&-\\
100&3.05E-06&6.50&2.53E-06&6.64&2.94E-06&6.87\\
200&9.78E-08&4.96&7.90E-08&5.00&9.86E-08&4.90\\
400&3.24E-09&4.91&2.59E-09&4.93&3.23E-09&4.93\\
\hline
\multicolumn{7}{|c|}{$CFL=8$}\\
\hline
$N_x$&$L^1$ Error&Order&$L^2$ Error&Order&$L^{\infty}$ Error&Order\\
\hline
50&6.81E-02&-&4.11E-02&-&4.51E-02&-\\
100&1.18E-03&5.85&6.23E-04&6.04&6.38E-04&6.15\\
200&5.89E-05&4.32&3.63E-05&4.10&5.46E-05&3.54\\
400&3.71E-06&3.99&2.57E-06&3.82&4.31E-06&3.66\\
\hline
\end{tabular} 
\end{center} 
\end{table}

\begin{example}(1D transport with variable coefficient in time)
\begin{equation}
    \label{1Dtransport_variabletime}
    u_t + \left(\frac{u}{t+1}\right)_x = 0,\qquad x\in[0,2\pi]
\end{equation}
with periodic boundary conditions and exact solution $u(x,t)=\text{exp}(-5(x-\log{(t+1)}-\pi)^2)$. Periodic boundary conditions are a valid assumption for sufficiently thin Gaussian curves and small enough times. The expected high-order convergence for both small and large CFL numbers is seen in Table \ref{1Dtransport_variabletime_table}. Fixing the mesh $N_x=400$ and varying the CFL number from 0.2 to 20, fifth-order convergence in time is seen in Figure \ref{1Dtransport_variabletime_plot}. Observe that there are two optimal CFL numbers for this mesh.
\end{example}

\begin{table}[h!] 
\begin{center}
\caption{Convergence study with spatial mesh refinement for equation \eqref{1Dtransport_variabletime} with RK4 at $T_f=1$.}
\label{1Dtransport_variabletime_table}
\begin{tabular}{|*{7}{c|}} 
\hline 
\multicolumn{7}{|c|}{$CFL=0.95$}\\
\hline
$N_x$&$L^1$ Error&Order&$L^2$ Error&Order&$L^{\infty}$ Error&Order\\
\hline
50&6.36E-03&-&5.80E-03&-&8.23E-03&-\\
100&2.37E-04&4.75&2.28E-04&4.67&5.33E-04&3.95\\
200&1.71E-06&7.12&1.30E-06&7.46&2.36E-06&7.82\\
400&4.40E-08&5.28&3.84E-08&5.08&6.02E-08&5.29\\
\hline
\multicolumn{7}{|c|}{$CFL=8$}\\
\hline
$N_x$&$L^1$ Error&Order&$L^2$ Error&Order&$L^{\infty}$ Error&Order\\
\hline
50&6.71E-02&-&5.72E-02&-&8.15E-02&-\\
100&7.74E-04&6.44&6.48E-04&6.46&1.06E-03&6.27\\
200&1.52E-05&5.67&1.26E-05&5.69&1.76E-05&5.91\\
400&9.65E-07&3.98&8.11E-07&3.96&9.61E-07&4.20\\
\hline
\end{tabular} 
\end{center} 
\end{table} 

\begin{figure}[h!]
\begin{minipage}{0.48\linewidth}
    \centering
    \includegraphics[width=\textwidth]{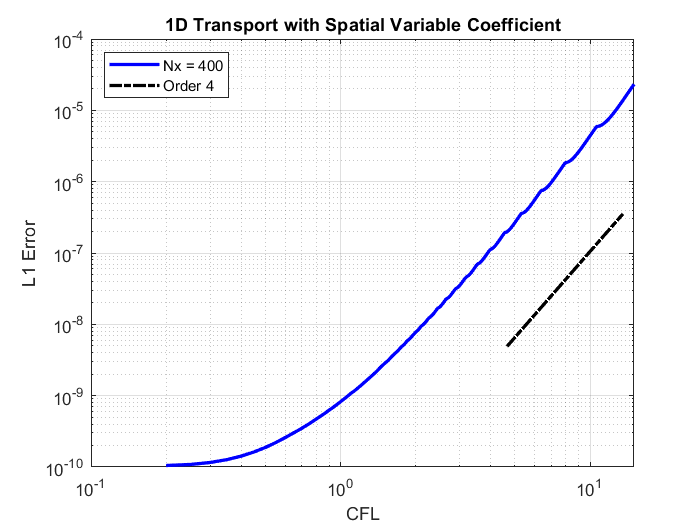}
    \caption{RK4, Final time $T_f=0.5$.}
    \label{1Dtransport_variablespace_plot}
\end{minipage}
\begin{minipage}{0.03\linewidth}
    \centering
    \ 
\end{minipage}
\begin{minipage}{0.48\linewidth}
    \centering
    \includegraphics[width=\textwidth]{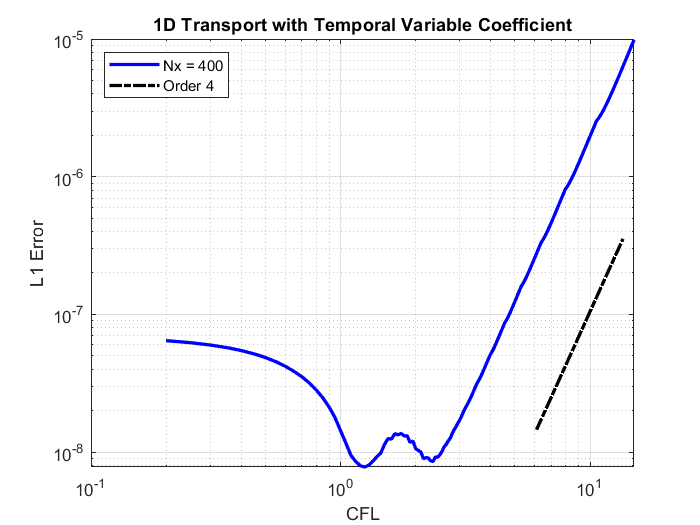}
    \caption{RK4, Final time $T_f=0.5$.}
    \label{1Dtransport_variabletime_plot}
\end{minipage}
\end{figure}


\subsection{Hyperbolic conservation laws: two-dimensional tests}

\begin{example}(2D transport with constant coefficient)
\begin{equation}
    \label{2Dtransport_constant}
    u_t + u_x + u_y = 0,\qquad x,y\in[-\pi,\pi]
\end{equation}
with periodic boundary conditions and exact solution $u(x,y,t)=\sin{(x+y-2t)}$. The expected high-order convergence is shown in Table \ref{2Dtransport_constant_table} when using Strang splitting. 
 Just like equation \eqref{1Dtransport_constant}, there is no temporal error for the convective part since the characteristics are traced exactly. 
\end{example}

\begin{table}[h!] 
\begin{center}
\caption{Convergence study with spatial mesh refinement for equation \eqref{2Dtransport_constant} with forward Euler and $CFL=8$ at $T_f=1$.}
\label{2Dtransport_constant_table}
\begin{tabular}{|*{7}{c|}} 
\hline 
\multicolumn{7}{|c|}{Strang splitting}\\
\hline
$N_x=N_y$&$L^1$ Error&Order&$L^2$ Error&Order&$L^{\infty}$ Error&Order\\
\hline
100&4.24E-09&-&7.49E-10&-&1.69E-10&-\\
200&1.27E-10&5.05&2.25E-11&5.05&5.15E-12&5.03\\
300&1.60E-11&5.11&2.84E-12&5.11&8.23E-13&4.52\\
400&3.57E-12&5.21&6.88E-13&4.92&3.68E-13&2.80\\
\hline
\end{tabular} 
\end{center} 
\end{table}

\begin{example}(Rigid body rotation)
\begin{equation}
    \label{2Dtransport_rigidbody}
    u_t - yu_x + xu_y = 0,\qquad x,y\in[-\pi,\pi]
\end{equation}
with periodic boundary conditions. We choose the exact solution $u(x,y,t)=u(x,y,t=0)=\text{exp}(-3(x^2+y^2))$ for convergence tests. The convergence results are presented in Tables \ref{2Dtransport_rigidbody_table1} and \ref{2Dtransport_rigidbody_table2}. Strang splitting dominates the error and we observe the expected second-order convergence. Whereas, the spatial error dominates when using fourth-order splitting as evidenced by the fifth-order convergence. The error plot using a fixed mesh $N_x=N_y=200$ and varying the CFL number from 0.1 to 50 is shown in Figure \ref{rigidplot1}. Second-order convergence in time is observed when using Strang splitting, and fourth-order convergence is observed when using fourth-order splitting. We note that comparable convergence results were observed for the non-symmetric initial condition $u(x,y,t=0)=\text{exp}(-3x^2-2y^2)$. To demonstrate the effectiveness of WENO-AO controlling spurious oscillations we choose the initial condition $u(x,y,t=0)=1$ if $x,y\in[-\pi/2,\pi/2]$; $u(x,y,t=0)=0$ otherwise. With a fixed mesh $N_x=N_y=100$ and $CFL=2.2$, we compute the solution up to time $T_f=2\pi$ using SSP RK3. The discontinuities are smoothed out and no spurious oscillations occur.
\end{example}

\begin{table}[h!] 
\begin{center}
\caption{Convergence study with spatial mesh refinement for equation \eqref{2Dtransport_rigidbody} with RK4 and $CFL=0.95$ at $T_f=0.5$.}
\label{2Dtransport_rigidbody_table1}
\begin{tabular}{|*{7}{c|}} 
\hline 
\multicolumn{7}{|c|}{Strang splitting}\\
\hline
$N_x=N_y$&$L^1$ Error&Order&$L^2$ Error&Order&$L^{\infty}$ Error&Order\\
\hline
100&3.57E-05&-&1.54E-05&-&2.43E-05&-\\
200&1.83E-06&4.29&9.94E-07&3.95&1.23E-06&4.31\\
300&8.01E-07&2.04&4.35E-07&2.04&4.72E-07&2.36\\
400&4.50E-07&2.01&2.44E-07&2.01&2.55E-07&2.13\\
\hline
\multicolumn{7}{|c|}{Fourth-order splitting}\\
\hline
$N_x=N_y$&$L^1$ Error&Order&$L^2$ Error&Order&$L^{\infty}$ Error&Order\\
\hline
100&1.05E-04&-&4.66E-05&-&8.95E-05&-\\
200&1.08E-06&6.60&6.13E-07&6.25&7.74E-07&6.85\\
300&1.39E-07&5.06&8.12E-08&4.99&1.02E-07&5.00\\
400&3.32E-08&4.99&1.93E-08&4.99&2.43E-08&4.99\\
\hline
\end{tabular} 
\end{center} 
\end{table} 

\begin{table}[h!] 
\begin{center}
\caption{Convergence study with spatial mesh refinement for equation \eqref{2Dtransport_rigidbody} with RK4 and $CFL=8$ at $T_f=0.5$.}
\label{2Dtransport_rigidbody_table2}
\begin{tabular}{|*{7}{c|}} 
\hline 
\multicolumn{7}{|c|}{Strang splitting}\\
\hline
$N_x=N_y$&$L^1$ Error&Order&$L^2$ Error&Order&$L^{\infty}$ Error&Order\\
\hline
100&4.97E-04&-&2.68E-04&-&2.78E-04&-\\
200&1.24E-04&2.00&6.74E-05&1.99&6.86E-05&2.02\\
300&5.59E-05&1.97&3.03E-05&1.97&3.08E-05&1.97\\
400&3.20E-05&1.94&1.73E-05&1.94&1.76E-05&1.94\\
\hline
\multicolumn{7}{|c|}{Fourth-order splitting}\\
\hline
$N_x=N_y$&$L^1$ Error&Order&$L^2$ Error&Order&$L^{\infty}$ Error&Order\\
\hline
100&4.95E-05&-&2.46E-05&-&6.82E-05&-\\
200&4.84E-07&6.68&2.92E-07&6.39&4.77E-07&7.16\\
300&6.17E-08&5.08&3.86E-08&4.99&6.19E-08&5.03\\
400&1.47E-08&4.99&9.22E-09&4.88&1.47E-08&5.00\\
\hline
\end{tabular} 
\end{center} 
\end{table} 

\begin{figure}[h!]
\begin{minipage}{0.48\linewidth}
    \centering
    \includegraphics[width=\textwidth]{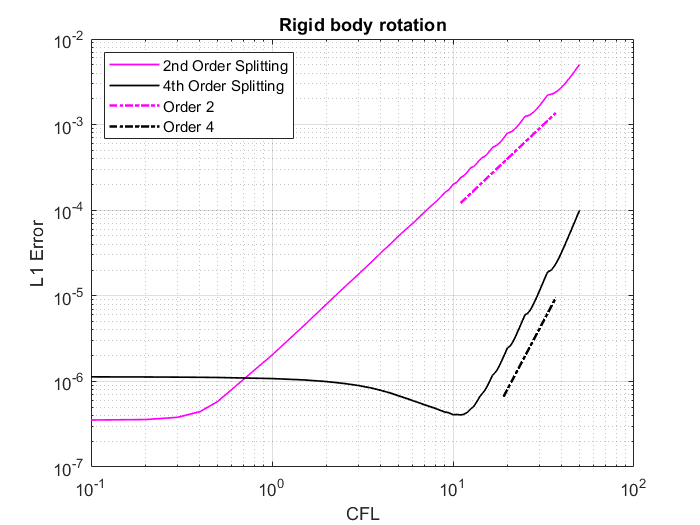}
    \caption{Error plot for \eqref{2Dtransport_rigidbody} with RK4 at $T_f=0.5$. $N_x=N_y=200$.}
    \label{rigidplot1}
\end{minipage}
\begin{minipage}{0.03\linewidth}
    \centering
    \ 
\end{minipage}
\begin{minipage}{0.48\linewidth}
    \centering
    \includegraphics[width=\textwidth]{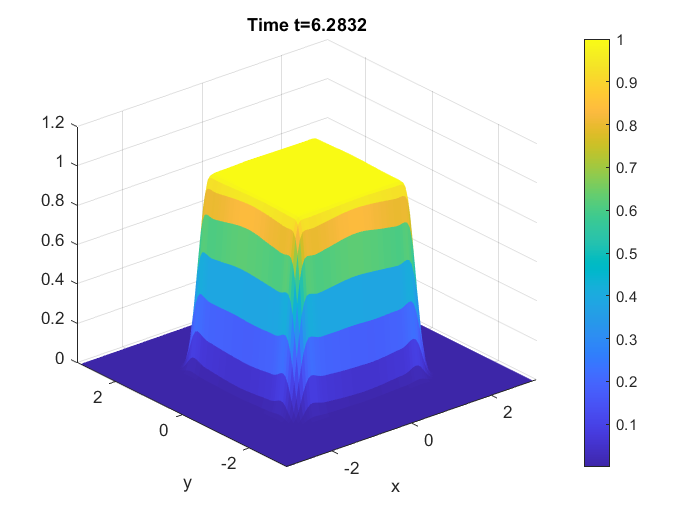}
    \caption{Plot of the numerical solution to \eqref{2Dtransport_rigidbody} with SSP RK3 and $CFL=2.2$ at $T_f=2\pi$. $N_x=N_y=100$.}
    \label{rigidplot2}
\end{minipage}
\end{figure}

\begin{example}(Swirling deformation)
\begin{equation}
    \label{2Dtransport_swirling}
    u_t - \left(\cos^2{(x/2)}\sin{(y)}g(t)u\right)_x + \left(\sin{(x)}\cos^2{(y/2)}g(t)u\right)_y = 0,\qquad x,y\in[-\pi,\pi]
\end{equation}
When testing convergence we set $g(t)=\cos{(\pi t/T_f)}\pi$ and choose the initial condition to be the smooth (with $C^5$ smoothness) cosine bell
\begin{equation}
    \label{cosbell}
    u(x,y,t=0)=
\begin{cases}
    r_0^b\cos^6{\left(\frac{r^b(x,y)\pi}{2r_0^b}\right)},&\text{if\ }r^b(x,y)<r_0^b,\\
    0,&\text{otherwise,}
\end{cases}
\end{equation}
where $r_0^b=0.3\pi$ and $r^b(x,y)=\sqrt{(x-x_0^b)^2+(y-y_0^b)^2}$ with $(x_0^b,y_0^b)=(0.3\pi,0)$. The convergence results under spatial mesh refinement are presented in Tables \ref{2Dtransport_swirling_table1} and \ref{2Dtransport_swirling_table2}. Surprisingly, high-order convergence is observed in all test cases, even for the large CFL number of 8. In particular, we observed super-convergence for $CFL=8$ when using Strang splitting. We got comparable convergence results when letting the initial condition be: (1) a cosine bell of $C^3$ smoothness, and (2) the cosine bell \eqref{cosbell} but with $x_0^b=0.6\pi$ and the width in the $x-$direction scaled by a factor of 1/2.\\
\ \\
The temporal orders of convergence are shown in Figure \ref{swirlplot1} using a fixed mesh $N_x=N_y=200$ and varying the CFL number from 0.1 to 25. When using Strang splitting the temporal convergence switches from second-order to third-order, indicating that for very large CFL numbers the splitting error does not dominate the time-stepping error as much. Fourth-order convergence is observed when using fourth-order splitting. To demonstrate the effectiveness of WENO-AO in controlling spurious oscillations we set $g(t)=1$ and choose the initial condition \cite{Leveque1996}
\begin{equation}
u(x,y,t=0) = 
\begin{cases}
    1,&\text{if\ }r^b(x,y)<r_0^b,\\
    0,&\text{otherwise,}
\end{cases}
\end{equation}
where $r_0^b=8\pi/5$ and $r^b(x,y)=\sqrt{(x-x_0^b)^2+(y-y_0^b)^2}$ with $(x_0^b,y_0^b)=(\pi,\pi)$. With a fixed mesh $N_x=N_y=100$ and $CFL=8$, we compute the solution up to time $T_f=5\pi$ using SSP RK3 and Strang splitting. The discontinuities are smoothed out and no spurious oscillations occur.
\end{example}

\begin{table}[h!] 
\begin{center}
\caption{Convergence study with spatial mesh refinement for equation \eqref{2Dtransport_swirling} with RK4 and $CFL=0.95$ at $T_f=1.5$.}
\label{2Dtransport_swirling_table1}
\begin{tabular}{|*{7}{c|}} 
\hline 
\multicolumn{7}{|c|}{Strang splitting}\\
\hline
$N_x=N_y$&$L^1$ Error&Order&$L^2$ Error&Order&$L^{\infty}$ Error&Order\\
\hline
100&5.17E-03&-&6.11E-03&-&2.04E-02&-\\
200&1.69E-04&4.94&1.69E-04&5.18&4.80E-04&5.41\\
300&3.12E-05&4.16&3.85E-05&3.64&1.41E-04&3.01\\
400&8.29E-06&4.61&1.12E-05&4.66&4.66E-05&3.86\\
\hline
\multicolumn{7}{|c|}{Fourth-order splitting}\\
\hline
$N_x=N_y$&$L^1$ Error&Order&$L^2$ Error&Order&$L^{\infty}$ Error&Order\\
\hline
100&1.42E-02&-&1.73E-02&-&5.25E-02&-\\
200&3.98E-04&5.16&3.70E-04&5.54&9.15E-04&5.84\\
300&7.69E-05&4.06&9.02E-05&3.48&3.24E-04&2.56\\
400&2.20E-05&4.35&2.89E-05&3.96&1.17E-04&3.52\\
\hline
\end{tabular} 
\end{center} 
\end{table} 

\begin{table}[h!] 
\begin{center}
\caption{Convergence study with spatial mesh refinement for equation \eqref{2Dtransport_swirling} with RK4 and $CFL=8$ at $T_f=1.5$.}
\label{2Dtransport_swirling_table2}
\begin{tabular}{|*{7}{c|}} 
\hline 
\multicolumn{7}{|c|}{Strang splitting}\\
\hline
$N_x=N_y$&$L^1$ Error&Order&$L^2$ Error&Order&$L^{\infty}$ Error&Order\\
\hline
100&1.90E-03&-&2.11E-03&-&6.83E-03&-\\
200&1.02E-04&4.23&8.88E-05&4.57&2.47E-04&4.79\\
300&1.90E-05&4.13&1.79E-05&3.94&6.36E-05&3.35\\
400&2.82E-06&6.63&4.11E-06&5.12&1.98E-05&4.05\\
\hline
\multicolumn{7}{|c|}{Fourth-order splitting}\\
\hline
$N_x=N_y$&$L^1$ Error&Order&$L^2$ Error&Order&$L^{\infty}$ Error&Order\\
\hline
100&3.89E-03&-&4.31E-03&-&1.43E-02&-\\
200&1.30E-04&4.90&1.34E-04&5.01&4.42E-04&5.01\\
300&2.41E-05&4.16&3.32E-05&3.44&1.40E-04&2.84\\
400&6.54E-06&4.53&1.02E-05&4.10&4.81E-05&3.70\\
\hline
\end{tabular} 
\end{center} 
\end{table} 

\begin{figure}[h!]
\begin{minipage}{0.48\linewidth}
    \centering
    \includegraphics[width=\textwidth]{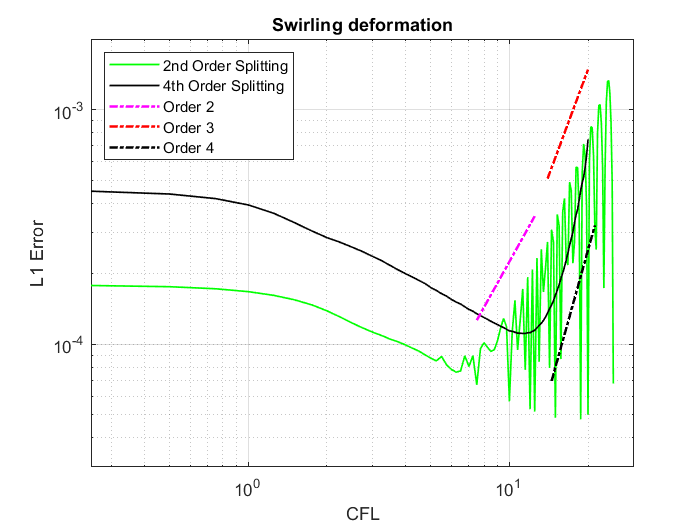}
    \caption{Error plot for \eqref{2Dtransport_swirling} with RK4 at $T_f=1.5$. $N_x=N_y=200$.}
    \label{swirlplot1}
\end{minipage}
\begin{minipage}{0.03\linewidth}
    \centering
    \ 
\end{minipage}
\begin{minipage}{0.48\linewidth}
    \centering
    \includegraphics[width=\textwidth]{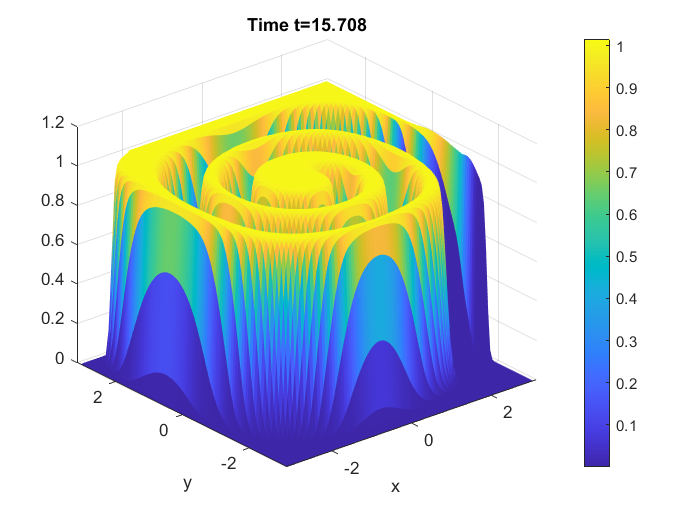}
    \caption{Plot of the numerical solution to \eqref{2Dtransport_swirling} with $g(t)=1$, SSP RK3 and $CFL=8$ at $T_f=5\pi$. $N_x=N_y=100$.}
    \label{swirlplot2}
\end{minipage}
\end{figure}


\subsection{Convection-diffusion equations: one-dimensional tests}

\begin{example}(1D equation with constant coefficient)
\begin{equation}
    \label{1D_CD_constant}
    u_t + u_x = \epsilon u_{xx},\qquad x\in[0,2\pi]
\end{equation}
with periodic boundary conditions and exact solution $u(x,t)=\sin{(x-t)}\text{exp}(-\epsilon t)$. We set $\epsilon=1$. The convergence results under spatial mesh refinement are shown in Table \ref{1D_CD_constant_table} for $CFL=0.95$ and $CFL=8$. In both cases we observe the expected third-order convergence since we are using IMEX(2,3,3) for the time-stepping. Note that there is no temporal error for the convective part since the characteristics are traced exactly and hence $F_{j+\frac{1}{2}}(t)=0$ for all $t\in[t^n,t^{n+1}]$ and $j=1,2,...,N_x$. Figure \ref{1D_CD_constant_plot} shows the expected third-order convergence in time using fixed mesh $N_x=400$ and varying the CFL number from 0.1 to 15.
\end{example}

\begin{table}[h!] 
\begin{center}
\caption{Convergence study with spatial mesh refinement for equation \eqref{1D_CD_constant} with IMEX(2,3,3) at $T_f=1$.}
\label{1D_CD_constant_table}
\begin{tabular}{|*{7}{c|}} 
\hline 
\multicolumn{7}{|c|}{$CFL=0.95$}\\
\hline
$N_x$&$L^1$ Error&Order&$L^2$ Error&Order&$L^{\infty}$ Error&Order\\
\hline
50&1.87E-04&-&8.26E-05&-&4.66E-05&-\\
100&2.55E-05&2.87&1.13E-05&2.87&6.36E-06&2.87\\
200&3.34E-06&2.93&1.48E-06&2.93&8.35E-07&2.93\\
400&4.31E-07&2.95&1.91E-07&2.95&1.08E-07&2.95\\
800&5.44E-08&2.99&2.41E-08&2.99&1.36E-08&2.99\\
\hline
\multicolumn{7}{|c|}{$CFL=8$}\\
\hline
$N_x$&$L^1$ Error&Order&$L^2$ Error&Order&$L^{\infty}$ Error&Order\\
\hline
50&6.87E-02&-&3.04E-02&-&1.72E-02&-\\
100&1.09E-02&2.66&4.82E-03&2.66&2.72E-03&2.66\\
200&1.63E-03&2.74&7.22E-04&2.74&4.07E-04&2.74\\
400&2.27E-04&2.84&1.01E-04&2.84&5.69E-05&2.84\\
800&3.03E-05&2.91&1.34E-05&2.91&7.57E-06&2.91\\
\hline
\end{tabular} 
\end{center} 
\end{table} 

\begin{example}(1D equation with variable coefficient)
\begin{equation}
    \label{1D_CD_variable}
    u_t + (\sin{(x)}u)_x = \epsilon u_{xx} + g,\qquad x\in[0,2\pi]
\end{equation}
with periodic boundary conditions and $g(x,t)=\sin{(2x)}\text{exp}(-\epsilon t)$ and exact solution $u(x,t)=\sin{(x)}\text{exp}(-\epsilon t)$. We set $\epsilon=1$. Table \ref{1D_CD_variable_table} shows the convergence results under spatial mesh refinement. Third-order convergence in space is observed for $CFL=0.95$. Whereas, the convergence for $CFL=8$ is roughly order 2.6 since the time-stepping start to dominate. We note that the order of convergence for IMEX(2,3,3) under increasing the CFL number dips slightly below three for larger CFL numbers. We use fixed mesh $N_x=400$ and vary the CFL number from 0.1 to 15 for the error plot showing the temporal order of convergence in Figure \ref{1D_CD_variable_plot}.
\end{example}

\begin{table}[h!] 
\begin{center}
\caption{Convergence study with spatial mesh refinement for equation \eqref{1D_CD_variable} with IMEX(2,3,3) at $T_f=1$.}
\label{1D_CD_variable_table}
\begin{tabular}{|*{7}{c|}} 
\hline 
\multicolumn{7}{|c|}{$CFL=0.95$}\\
\hline
$N_x$&$L^1$ Error&Order&$L^2$ Error&Order&$L^{\infty}$ Error&Order\\
\hline
50&2.10E-03&-&9.99E-04&-&7.70E-04&-\\
100&3.99E-04&2.39&1.88E-04&2.41&1.44E-04&2.42\\
200&6.41E-05&2.64&2.99E-05&2.65&2.27E-05&2.66\\
400&9.84E-06&2.70&4.57E-06&2.71&3.44E-06&2.73\\
800&1.30E-06&2.92&6.04E-07&2.92&4.53E-07&2.92\\
\hline
\multicolumn{7}{|c|}{$CFL=8$}\\
\hline
$N_x$&$L^1$ Error&Order&$L^2$ Error&Order&$L^{\infty}$ Error&Order\\
\hline
50&7.50E-01&-&4.34E-01&-&3.92E-01&-\\
100&1.26E-01&2.58&6.60E-02&2.72&6.63E-02&2.56\\
200&1.74E-02&2.85&8.31E-03&2.99&6.50E-03&3.35\\
400&3.09E-03&2.50&1.46E-03&2.51&1.12E-03&2.53\\
800&5.03E-04&2.62&2.36E-04&2.63&1.81E-04&2.63\\
\hline
\end{tabular} 
\end{center} 
\end{table} 

\begin{figure}[h!]
\begin{minipage}{0.48\linewidth}
    \centering
    \includegraphics[width=\textwidth]{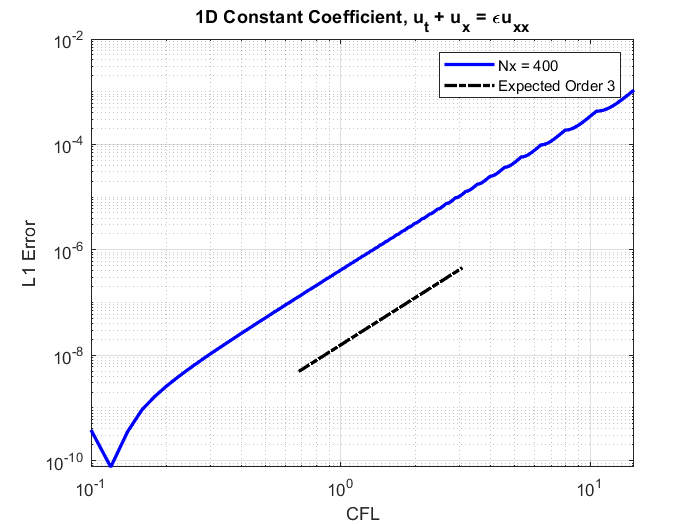}
    \caption{IMEX(2,3,3), $\epsilon = 1$, Final time $T_f=0.5$.}
    \label{1D_CD_constant_plot}
\end{minipage}
\begin{minipage}{0.03\linewidth}
    \centering
    \ 
\end{minipage}
\begin{minipage}{0.48\linewidth}
    \centering
    \includegraphics[width=\textwidth]{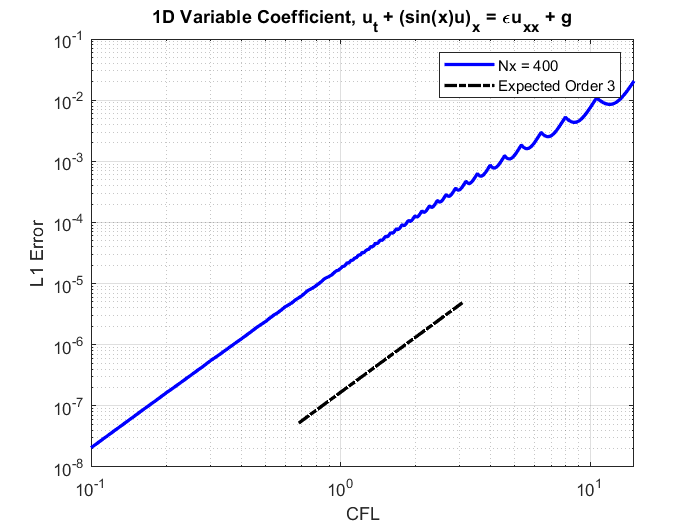}
    \caption{IMEX(2,3,3), $\epsilon = 1$, Final time $T_f=0.5$.}
    \label{1D_CD_variable_plot}
\end{minipage}
\end{figure}

\begin{example}(1D viscous Burgers' equation)
\begin{equation}
    \label{1D_viscous_burgers}
    u_t + \left(\frac{u^2}{2}\right) = \epsilon u_{xx},\qquad x\in[0,2]
\end{equation}
with periodic boundary conditions. As in \cite{Seydaoglu2016}, we set $\epsilon=0.1$ and choose the initial condition $u(x,t=0)=0.2\sin{(\pi x)}$. The exact solution is
\[u(x,t) = 2\epsilon\pi\frac{\sum\limits_{n=1}^{\infty}{c_n\text{exp}(-n^2\pi^2\epsilon t)n\sin{(n\pi x)}}}{c_0 + \sum\limits_{n=1}^{\infty}{c_n\text{exp}(-n^2\pi^2\epsilon t)\cos{(n\pi x)}}},\]
where $c_0 = \int_0^1{\text{exp}(-(1-\cos{(\pi x)})/(10\pi\epsilon))dx}$ and $c_n=2\int_0^1{\text{exp}(-(1-\cos{(\pi x)})/(10\pi\epsilon))\cos{(n\pi x)}dx}$ for $n=1,2,3,...$. We computed the first ten Fourier coefficients in Mathematica$^{\text{\textregistered}}$ for the exact solution; the eleventh Fourier coefficient was less than machine precision.\\
\ \\
Table \ref{1D_viscous_burgers_table} shows the expected orders of convergence under spatial mesh refinement. Third-order convergence is observed for $CFL=0.95$. Whereas, the convergence for $CFL=8$ is slightly below order three since the time-stepping error starts to dominate. W note that the order of convergence for IMEX(2,3,3) under increasing the CFL number dips slightly below three for larger CFL numbers. The error plot in Figure \ref{1D_viscous_burgers_plot} showing third-order convergence in time uses mesh $N_x=400$ and CFL numbers varying from 0.1 to 15.
\end{example}

\begin{table}[h!] 
\begin{center}
\caption{Convergence study with spatial mesh refinement for equation \eqref{1D_viscous_burgers} with IMEX(2,3,3) at $T_f=1$.}
\label{1D_viscous_burgers_table}
\begin{tabular}{|*{7}{c|}} 
\hline 
\multicolumn{7}{|c|}{$CFL=0.95$}\\
\hline
$N_x$&$L^1$ Error&Order&$L^2$ Error&Order&$L^{\infty}$ Error&Order\\
\hline
50&9.50E-05&-&8.43E-05&-&1.17E-04&-\\
100&1.48E-05&2.68&1.32E-05&3.67&1.87E-05&2.65\\
200&2.42E-06&2.62&2.20E-06&2.59&3.21E-06&2.55\\
400&3.27E-07&2.88&3.00E-07&2.87&4.42E-07&2.86\\
800&4.28E-08&2.94&3.95E-08&2.93&5.86E-08&2.92\\
\hline
\multicolumn{7}{|c|}{$CFL=8$}\\
\hline
$N_x$&$L^1$ Error&Order&$L^2$ Error&Order&$L^{\infty}$ Error&Order\\
\hline
50&1.26E-02&-&1.31E-02&-&2.24E-02&-\\
100&2.79E-03&2.17&2.50E-03&2.38&3.70E-03&2.60\\
200&4.73E-04&2.56&4.16E-04&2.59&5.64E-04&2.71\\
400&1.28E-04&1.89&1.15E-04&1.85&1.64E-04&1.78\\
800&1.97E-05&2.70&1.78E-05&2.70&2.56E-05&2.68\\
\hline
\end{tabular} 
\end{center} 
\end{table}

\begin{example}(The 0D1V Leonard-Bernstein (linearized) Fokker-Planck equation)
\begin{equation}
    \label{1DFP}
    f_t -\frac{1}{\epsilon}((v_x - \overline{v}_x)f)_{v_x}= \frac{1}{\epsilon}Df_{v_xv_x},\qquad v_x\in[-2\pi,2\pi]
\end{equation}
with zero boundary conditions and equilibrium solution the Maxwellian
\begin{equation}
    f_M(v_x) = \frac{n}{\sqrt{2\pi RT}}\text{exp}\left(-\frac{(v_x - \overline{v}_x)^2}{2RT}\right),
\end{equation}
where $\epsilon=1$, gas constant $R=1/6$, temperature $T=3$, thermal velocity $v_{th}=\sqrt{2RT}=\sqrt{2D}=1$, number density $n=\pi$, and bulk velocity $\overline{v}_x=0$. These quantities were chosen for scaling convenience. When testing convergence we set the initial distribution $f(v_x,t=0)=f_M(v_x)$. Table \ref{1DFP_table} shows the convergence results, for which we use IMEX(4,4,3) for the time-stepping; we show the results using IMEX(4,4,3) because it gave slightly better convergence than IMEX(2,3,3). We observe fourth-order convergence under spatial mesh refinement for $CFL=0.95$. Whereas, for $CFL=8$ the time-stepping error starts to dominate and we observe third-order convergence. The error plot in Figure \ref{1DFP_plot} showing third-order convergence in time uses a fixed mesh $N_{v_x}=400$ and CFL numbers varying from 0.1 to 15. We note that although high-order convergence is observed, the proposed EL-RK-FV algorithm is not equilibrium-preserving.
\end{example}

\begin{table}[h!] 
\begin{center}
\caption{Convergence study with spatial mesh refinement for equation \eqref{1DFP} with IMEX(4,4,3) at $T_f=1$.}
\label{1DFP_table}
\begin{tabular}{|*{7}{c|}} 
\hline 
\multicolumn{7}{|c|}{$CFL=0.95$}\\
\hline
$N_{v_x}$&$L^1$ Error&Order&$L^2$ Error&Order&$L^{\infty}$ Error&Order\\
\hline
50&8.02E-04&-&5.19E-04&-&5.65E-04&-\\
100&6.12E-05&3.71&3.83E-05&3.76&4.27E-05&3.73\\
200&4.41E-06&3.79&2.63E-06&3.87&2.84E-06&3.91\\
400&2.99E-07&3.88&1.73E-07&3.93&1.80E-07&3.98\\
800&2.03E-08&3.88&1.13E-08&3.94&1.10E-08&4.04\\
\hline
\multicolumn{7}{|c|}{$CFL=8$}\\
\hline
$N_{v_x}$&$L^1$ Error&Order&$L^2$ Error&Order&$L^{\infty}$ Error&Order\\
\hline
50&9.33E-03&-&4.52E-03&-&3.63E-03&-\\
100&1.34E-03&2.80&6.60E-04&2.78&5.63E-04&2.69\\
200&1.91E-04&2.81&9.57E-05&2.79&8.15E-05&2.79\\
400&3.21E-05&2.57&1.61E-05&2.57&1.34E-05&2.61\\
800&4.13E-06&2.96&2.09E-06&2.95&1.74E-06&2.94\\
\hline
\end{tabular} 
\end{center} 
\end{table} 

\begin{figure}[h!]
\begin{minipage}{0.48\linewidth}
    \centering
    \includegraphics[width=\textwidth]{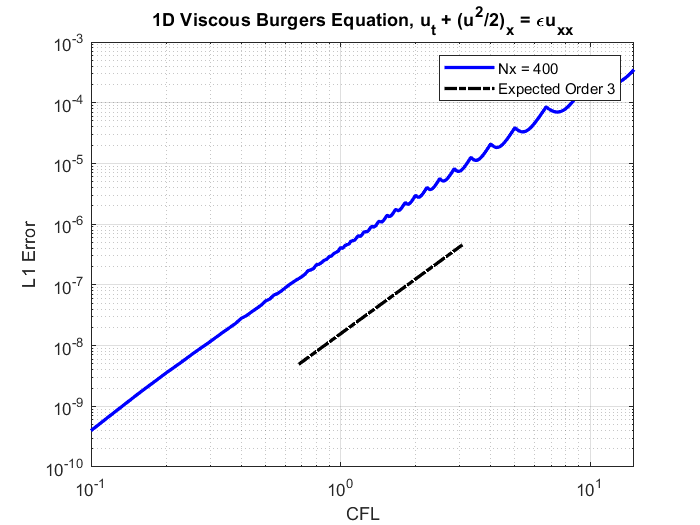}
    \caption{IMEX(2,3,3), $\epsilon = 0.1$, Final time $T_f=0.5$.}
    \label{1D_viscous_burgers_plot}
\end{minipage}
\begin{minipage}{0.03\linewidth}
    \centering
    \ 
\end{minipage}
\begin{minipage}{0.48\linewidth}
    \centering
    \includegraphics[width=\textwidth]{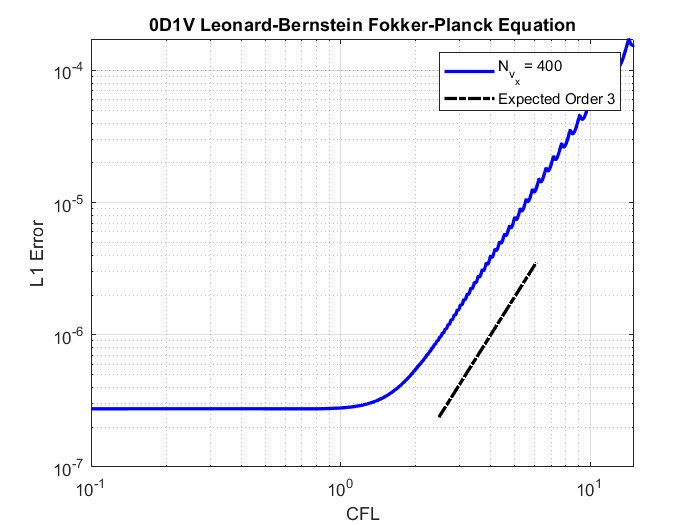}
    \caption{IMEX(4,4,3), Final time $T_f=0.5$.}
    \label{1DFP_plot}
\end{minipage}
\end{figure}

\subsection{Convection-diffusion equations: two-dimensional tests}

\begin{example}(2D equation with constant coefficient)
\begin{equation}
    \label{2D_CD_constant}
    u_t + u_x + u_y = \epsilon(u_{xx} + u_{yy}),\qquad x,y\in[0,2\pi]
\end{equation}
with periodic boundary conditions and exact solution $u(x,y,t)=\text{exp}(-2\epsilon t)\sin{(x+y-2t)}$. We set $\epsilon=1$. Third-order convergence under spatial mesh refinement is seen in Table \ref{2D_CD_constant_table} for $CFL=0.95$ and $CFL=8$. As with equation \eqref{1D_CD_constant}, there is no temporal error for the convective part since the characteristics are traced exactly. Note that the error is larger for $CFL=8$ than $CFL=0.95$ since this problem also has diffusion. Figure \ref{2D_CD_constant_plot} shows the third-order convergence in time using fixed mesh $N_x=N_y=400$ and varying the CFL number from 6 to 20.
\end{example}

\begin{table}[h!] 
\begin{center}
\caption{Convergence study with spatial mesh refinement for equation \eqref{2D_CD_constant} with IMEX(2,3,3) and Strang splitting at $T_f=0.5$.}
\label{2D_CD_constant_table}
\begin{tabular}{|*{7}{c|}} 
\hline 
\multicolumn{7}{|c|}{$CFL=0.95$}\\
\hline
$N_x=N_y$&$L^1$ Error&Order&$L^2$ Error&Order&$L^{\infty}$ Error&Order\\
\hline
50&6.74E-05&-&1.19E-05&-&2.68E-06&-\\
100&1.02E-05&2.72&1.81E-06&2.72&4.07E-07&2.72\\
200&1.41E-06&2.86&2.49E-07&2.86&5.62E-08&2.86\\
400&1.86E-07&2.92&3.29E-08&2.92&7.41E-09&2.92\\
\hline
\multicolumn{7}{|c|}{$CFL=8$}\\
\hline
$N_x=N_y$&$L^1$ Error&Order&$L^2$ Error&Order&$L^{\infty}$ Error&Order\\
\hline
50&3.92E-02&-&6.94E-03&-&1.56E-03&-\\
100&5.82E-03&2.75&1.03E-03&2.75&2.32E-04&2.75\\
200&8.09E-04&2.85&1.43E-04&2.85&3.22E-05&2.85\\
400&1.07E-04&2.92&1.90E-05&2.92&4.27E-06&2.92\\
\hline
\end{tabular} 
\end{center} 
\end{table}

\begin{example}(Rigid body rotation with diffusion)
\begin{equation}
    \label{2D_CD_rigidbody}
    u_t - yu_x + xu_y = \epsilon(u_{xx} + u_{yy}) + g,\qquad x,y\in[-2\pi,2\pi]
\end{equation}
with periodic boundary conditions, $g(x,y,t)=(6\epsilon-4xy-4\epsilon(x^2+9y^2))\text{exp}(-(x^2+3y^2+2\epsilon t))$, and exact solution $u(x,y,t)=\text{exp}(-(x^2+3y^2+2\epsilon t))$. We set $\epsilon=1$. Table \ref{2D_CD_rigidbody_table2} shows the order of convergence when using IMEX(4,4,3). We use IMEX(4,4,3) instead of IMEX(2,3,3) because the latter choice, along with the Strang splitting, showed an order of convergence less than two for large CFL numbers. The expected second-order convergence in time (due to Strang splitting) is seen in Figure \ref{2D_CD_rigidbody_plot} assumes fixed mesh $N_x=N_y=400$ and CFL numbers varying from 6 to 20.
\end{example}

\begin{table}[h!] 
\begin{center}
\caption{Convergence study with spatial mesh refinement for equation \eqref{2D_CD_rigidbody} with IMEX(4,4,3) and Strang splitting at $T_f=0.5$.}
\label{2D_CD_rigidbody_table2}
\begin{tabular}{|*{7}{c|}} 
\hline
\multicolumn{7}{|c|}{$CFL=0.95$}\\
\hline
$N_x=N_y$&$L^1$ Error&Order&$L^2$ Error&Order&$L^{\infty}$ Error&Order\\
\hline
50&3.22E-03&-&1.42E-03&-&1.44E-03&-\\
100&3.92E-04&3.04&1.75E-04&3.02&2.00E-04&2.85\\
200&7.27E-05&2.43&3.14E-05&2.47&3.54E-05&2.50\\
400&1.65E-05&2.14&7.11E-06&2.15&7.80E-06&2.18\\
\hline
\multicolumn{7}{|c|}{$CFL=5$}\\
\hline
$N_x=N_y$&$L^1$ Error&Order&$L^2$ Error&Order&$L^{\infty}$ Error&Order\\
\hline
50&2.21E-02&-&8.81E-03&-&7.44E-03&-\\
100&5.98E-03&1.89&2.48E-03&1.83&2.46E-03&1.60\\
200&1.61E-03&1.89&6.81E-04&1.86&7.14E-04&1.79\\
400&4.24E-04&1.93&1.81E-04&1.91&1.94E-04&1.88\\
\hline
\end{tabular} 
\end{center} 
\end{table} 

\begin{figure}[h!]
\begin{minipage}{0.48\linewidth}
    \centering
    \includegraphics[width=\textwidth]{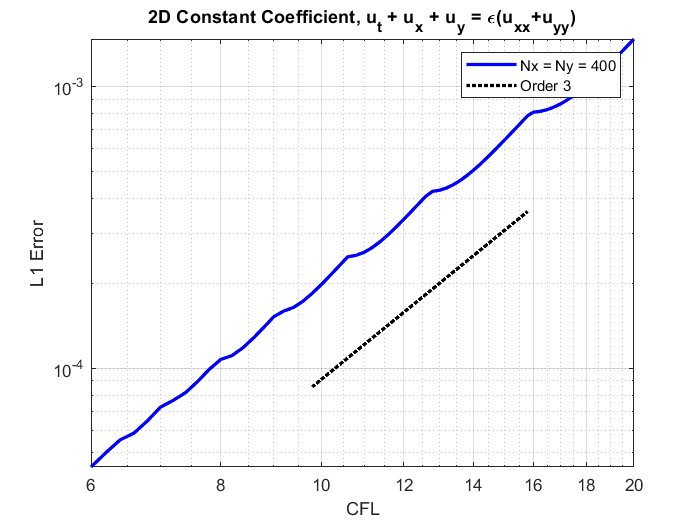}
    \caption{IMEX(2,3,3), $\epsilon = 1$, Final time $T_f=0.5$.}
    \label{2D_CD_constant_plot}
\end{minipage}
\begin{minipage}{0.03\linewidth}
    \centering
    \ 
\end{minipage}
\begin{minipage}{0.48\linewidth}
    \centering
    \includegraphics[width=\textwidth]{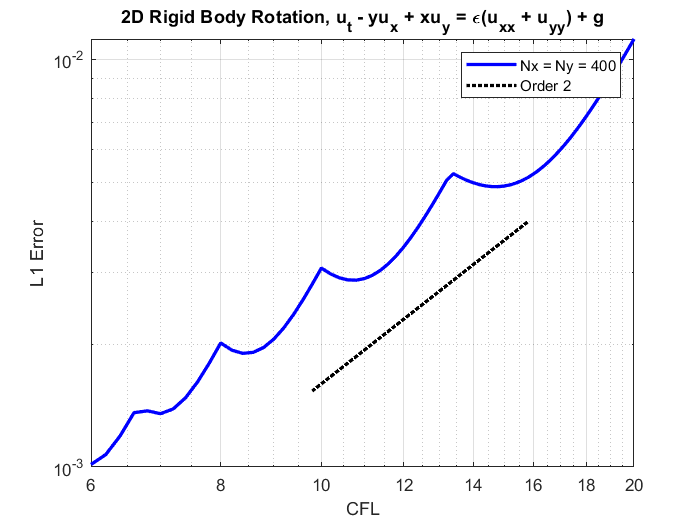}
    \caption{IMEX(4,4,3), $\epsilon = 1$, Final time $T_f=0.1$.}
    \label{2D_CD_rigidbody_plot}
\end{minipage}
\end{figure}

\begin{example}(Swirling deformation with diffusion)
\begin{equation}
    \label{2D_CD_swirling}
    u_t - \left(\cos^2{(x/2)}\sin{(y)}f(t)u\right)_x + \left(\sin{(x)}\cos^2{(y/2)}f(t)u\right)_y = \epsilon(u_{xx} + u_{yy}),\qquad x,y\in[-\pi,\pi]
\end{equation}
When testing the convergence we set $f(t)=\cos{(\pi t/T_f)}\pi$, $\epsilon=1$, and choose the initial condition to be the cosine bell in equation \eqref{cosbell}. Since there is no analytic solution, we use a reference solution computed with a mesh size of $400\times 400$ and $CFL=0.1$. The convergence results under spatial mesh refinement are presented in Table \ref{2D_CD_swirling_table2}. The splitting error seems to dominate the time-stepping error for $CFL=0.95$ as evidenced by the apparent second-order convergence. Whereas, the time-stepping error seems to contribute more for $CFL=8$. Due to the interplay between the time-stepping and splitting errors, the temporal order 2.4 is also observed in Figure \ref{2D_CD_swirling_plot}, for which we use fixed mesh $N_x=N_y=400$ and CFL numbers varying from 6 to 20.
\end{example}

\begin{table}[h!] 
\begin{center}
\caption{Convergence study with spatial mesh refinement for equation \eqref{2D_CD_swirling} with IMEX(2,3,3) and Strang splitting at $T_f=0.1$.}
\label{2D_CD_swirling_table2}
\begin{tabular}{|*{7}{c|}} 
\hline 
\multicolumn{7}{|c|}{$CFL=0.95$}\\
\hline
$N_x=N_y$&$L^1$ Error&Order&$L^2$ Error&Order&$L^{\infty}$ Error&Order\\
\hline
50&4.98E-04&-&2.57E-04&-&3.38E-04&-\\
100&9.13E-05&2.45&4.58E-05&2.49&6.05E-05&2.48\\
200&1.91E-05&2.26&9.56E-06&2.26&1.22E-05&2.31\\
400&4.48E-06&2.09&2.19E-06&2.13&2.61E-06&2.23\\
\hline
\multicolumn{7}{|c|}{$CFL=8$}\\
\hline
$N_x=N_y$&$L^1$ Error&Order&$L^2$ Error&Order&$L^{\infty}$ Error&Order\\
\hline
50&5.93E-02&-&3.60E-02&-&5.82E-02&-\\
100&2.05E-02&1.53&1.16E-02&1.63&1.65E-02&1.82\\
200&3.14E-03&2.71&1.67E-03&2.80&2.04E-03&3.01\\
400&6.08E-04&2.37&3.13E-04&2.41&4.16E-04&2.30\\
\hline
\end{tabular} 
\end{center} 
\end{table}

\begin{example}(2D viscous Burgers' equation)
\begin{equation}
    \label{2D_viscousburgers}
    u_t + \left(\frac{u^2}{2}\right)_x + \left(\frac{u^2}{2}\right)_y = \epsilon(u_{xx} + u_{yy}) + g,\qquad x,y\in[-\pi,\pi]
\end{equation}
with periodic boundary conditions. As in \cite{Wang2016}, we set $\epsilon=0.1$, $g(x,y,t)=\text{exp}(-4\epsilon t)\sin{(2(x+y))}$, and suppose the solution $u(x,y,t)=\text{exp}(-2\epsilon t)\sin{(x+y)}$. The convergence results are presented in Table \ref{2D_viscousburgers_table2}. The splitting error seems to dominate the time-stepping error for $CFL=0.95$ as evidenced by the second-order convergence. Whereas, the time-stepping error seems to contribute more for $CFL=8$ since the order is between two and three. The temporal order of convergence in the $L^1$ norm is roughly 2.3, as seen in Figure \ref{2D_viscousburgers_plot}, for which we use fixed mesh $N_x=N_y=400$ and CFL numbers varying from 6 to 20.
\end{example}

\begin{table}[h!] 
\begin{center}
\caption{Convergence study with spatial mesh refinement for equation \eqref{2D_viscousburgers} with IMEX(2,3,3) and Strang splitting at $T_f=0.5$.}
\label{2D_viscousburgers_table2}
\begin{tabular}{|*{7}{c|}} 
\hline 
\multicolumn{7}{|c|}{$CFL=0.95$}\\
\hline
$N_x=N_y$&$L^1$ Error&Order&$L^2$ Error&Order&$L^{\infty}$ Error&Order\\
\hline
50&3.01E-04&-&5.21E-05&-&1.19E-05&-\\
100&7.12E-05&2.08&1.30E-05&2.00&3.42E-06&1.80\\
200&1.76E-05&2.02&3.34E-06&1.96&9.42E-07&1.86\\
400&4.53E-06&1.95&8.60E-07&1.96&2.49E-07&1.92\\
\hline
\multicolumn{7}{|c|}{$CFL=8$}\\
\hline
$N_x=N_y$&$L^1$ Error&Order&$L^2$ Error&Order&$L^{\infty}$ Error&Order\\
\hline
50&8.11E-02&-&1.60E-02&-&5.80E-03&-\\
100&1.02E-02&2.99&2.00E-03&3.00&7.78E-04&2.90\\
200&1.61E-03&2.66&3.01E-04&2.74&9.81E-05&2.99\\
400&3.47E-04&2.21&5.97E-05&2.33&1.35E-05&2.86\\
\hline
\end{tabular} 
\end{center} 
\end{table} 

\begin{figure}[h!]
\begin{minipage}{0.48\linewidth}
    \centering
    \includegraphics[width=\textwidth]{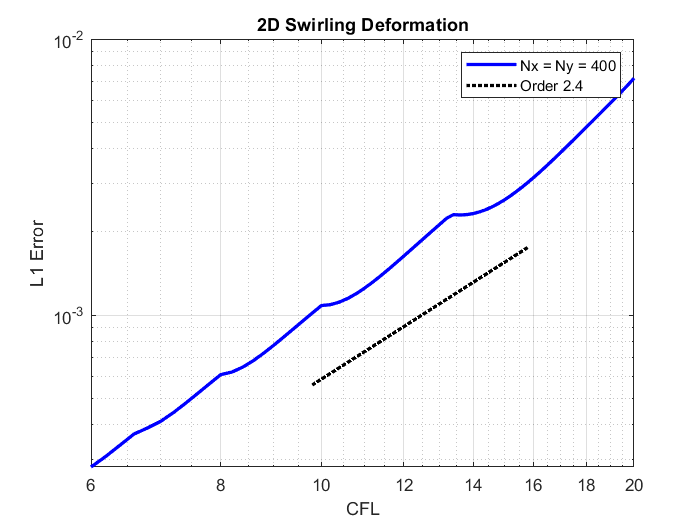}
    \caption{IMEX(2,3,3), $\epsilon = 1$, Final time $T_f=0.1$.}
    \label{2D_CD_swirling_plot}
\end{minipage}
\begin{minipage}{0.03\linewidth}
    \centering
    \ 
\end{minipage}
\begin{minipage}{0.48\linewidth}
    \centering
    \includegraphics[width=\textwidth]{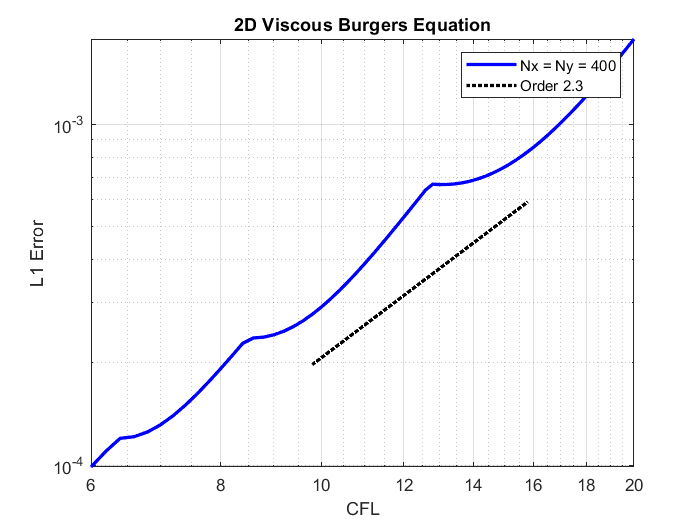}
    \caption{IMEX(2,3,3), $\epsilon = 0.1$, Final time $T_f=0.2$.}
    \label{2D_viscousburgers_plot}
\end{minipage}
\end{figure}

\begin{example}(The 0D2V Leonard-Bernstein (linearized) Fokker-Planck equation)
\begin{equation}
    \label{2DFP}
    f_t -\frac{1}{\epsilon}((v_x - \overline{v}_x)f)_{v_x} - \frac{1}{\epsilon}((v_y - \overline{v}_y)f)_{v_y}= \frac{1}{\epsilon}D(f_{v_xv_x} + f_{v_yv_y}),\qquad v_x,v_y\in[-2\pi,2\pi]
\end{equation}
with zero boundary conditions and equilibrium solution the Maxwellian
\begin{equation}
    f_M(v_x,v_y) = \frac{n}{2\pi RT}\text{exp}\left(-\frac{(v_x - \overline{v}_x)^2 + (v_y-\overline{v}_y)^2}{2RT}\right),
\end{equation}
where $\epsilon=1$, gas constant $R=1/6$, temperature $T=3$, thermal velocity $v_{th}=\sqrt{2RT}=\sqrt{2D}=1$, number density $n=\pi$, and bulk velocities $\overline{v}_x=\overline{v}_y=0$. These quantities were chosen for scaling convenience. When testing the spatial and temporal orders of accuracy we set the initial distribution $f(v_x,v_y,t=0)=f_M(v_x,v_y)$. Table \ref{2DFP_table} shows the convergence results under spatial mesh refinement. We observe higher fourth-order convergence in space for $CFL=0.95$. The time-stepping and splitting errors start to dominate the spatial error for larger CFL numbers, as observed for $CFL=8$. Figure \ref{2DFP_plot} shows the temporal order of convergence is roughly 2.6 for fixed mesh $N_{v_x}=N_{v_y}=400$ and CFL numbers varying from 6 to 20. We again note the interplay between the third-order time-stepping and second-order splitting.\\
\ \\
When testing for relaxation of the system, we choose the initial distribution $f(v_x,v_y,t=0)=f_{M1}(v_x,v_y)+f_{M2}(v_x,v_y)$, that is, the sum of two randomly generated Maxwellians such that the total macro-parameters are preserved. The number density, bulk velocities, and temperature of each Maxwellian are listed in Table \ref{2maxwells}. We set $\overline{v}_y=0$ so that the two generated Maxwellians are shifted only along the $v_x$ axis.\\

\begin{table}[h!] 
\begin{center}
\label{}
\begin{tabular}{|c|c|c|} 
\hline 
&$f_{M1}$&$f_{M2}$\\
\hline
$n$&1.990964530353041&1.150628123236752\\
\hline
$\overline{v}_x$&0.4979792385268875&-0.8616676237412346\\
\hline
$\overline{v}_y$&0&0\\
\hline
$T$&2.46518981703837&0.4107062104302872\\
\hline
\end{tabular} 
\caption{$n=\pi$, $\overline{\mathbf{v}}=\mathbf{0}$, and $T=3$.}
\label{2maxwells}
\end{center} 
\end{table}
\noindent The macro-parameters we want to conserve are number density, bulk velocity, and temperature, which in two dimensions are respectively given by
\begin{subequations}
\begin{equation}
    n = \int_{-\infty}^{\infty}{\int_{-\infty}^{\infty}{f(\mathbf{v})dv_ydv_x}},
\end{equation}
\begin{equation}
    \overline{\mathbf{v}} = \frac{1}{n}\int_{-\infty}^{\infty}{\int_{-\infty}^{\infty}{\mathbf{v}f(\mathbf{v})dv_ydv_x}},
\end{equation}
\begin{equation}
    T = \frac{1}{2Rn}\int_{-\infty}^{\infty}{\int_{-\infty}^{\infty}{(\mathbf{v}-\overline{\mathbf{v}})^2f(\mathbf{v})dv_ydv_x}}.
\end{equation}
\end{subequations}
Figures \ref{FPtwoMaxwell_images}(f) and \ref{FPtwoMaxwell_snapshots} show the solution using fixed mesh $N_{v_x}=N_{v_y}=200$ and $CFL=6$. Although we computed the solution up to time $T_f=20$, there was no difference (to the naked eye) after time $t=3$. Although the solution appears to reach equilibrium, we again note that the proposed EL-RK-FV algorithm is not equilibrium-preserving. Figure \ref{FPtwoMaxwell_images}(a) verifies mass conservation, but Figure \ref{FPtwoMaxwell_images}(b) implies that the numerical solution has some negative values and is not positivity-preserving. Referring to Figure \ref{FPtwoMaxwell_images}, momentum and energy are not conserved. As seen in Figure \ref{FPtwoMaxwell_images}(d), the bulk velocity in the $v_y$-direction is on the order of machine epsilon because we constructed the two Maxwellians in Table \ref{2maxwells} such that $\overline{v}_{M1,y}=\overline{v}_{M2,y}=0$. Hence, there is no drift velocity in the $v_y$-direction.
\end{example}

\begin{table}[h!] 
\begin{center}
\caption{Convergence study with spatial mesh refinement for equation \eqref{2DFP} with IMEX(2,3,3) and Strang splitting at $T_f=0.5$.}
\label{2DFP_table}
\begin{tabular}{|*{7}{c|}} 
\hline 
\multicolumn{7}{|c|}{$CFL=0.95$}\\
\hline
$N_{v_x}=N_{v_y}$&$L^1$ Error&Order&$L^2$ Error&Order&$L^{\infty}$ Error&Order\\
\hline
50&9.07E-04&-&4.22E-04&-&5.49E-04&-\\
100&7.19E-05&3.66&3.15E-05&3.74&4.36E-05&3.66\\
200&5.35E-06&3.75&2.15E-06&3.87&2.93E-06&3.89\\
400&3.54E-07&3.92&1.37E-07&3.98&1.82E-07&4.01\\
\hline
\multicolumn{7}{|c|}{$CFL=8$}\\
\hline
$N_{v_x}=N_{v_y}$&$L^1$ Error&Order&$L^2$ Error&Order&$L^{\infty}$ Error&Order\\
\hline
50&5.70E-03&-&1.84E-03&-&1.26E-03&-\\
100&1.08E-03&2.40&3.53E-04&2.39&2.82E-04&2.16\\
200&1.69E-04&2.67&5.68E-05&2.64&5.02E-05&2.49\\
400&2.73E-05&2.63&9.30E-06&2.61&8.63E-06&2.54\\
\hline
\end{tabular} 
\end{center} 
\end{table} 

\begin{figure}[h!]
\centering
\includegraphics[width=0.49\textwidth]{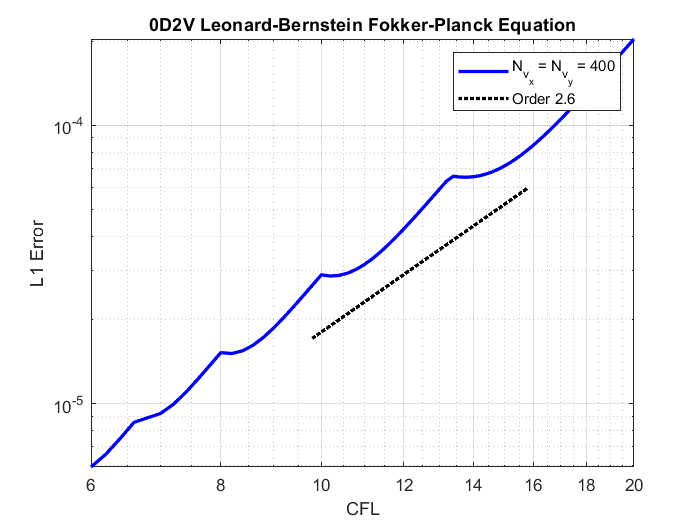}
\caption{IMEX(2,3,3), Final time $T_f=0.1$.}
\label{2DFP_plot}
\end{figure}

\begin{figure}[h!]
\begin{minipage}[b]{0.33\linewidth}
    \centering
    \includegraphics[width=1.05\textwidth]{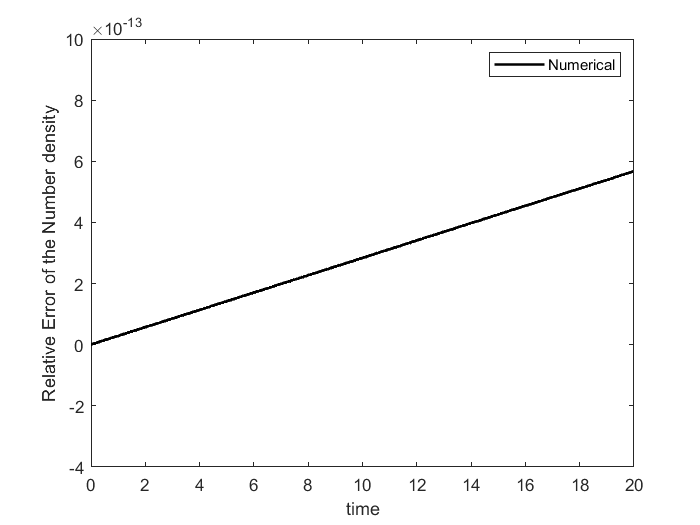}
    \caption*{(a)}
    \includegraphics[width=1.05\textwidth]{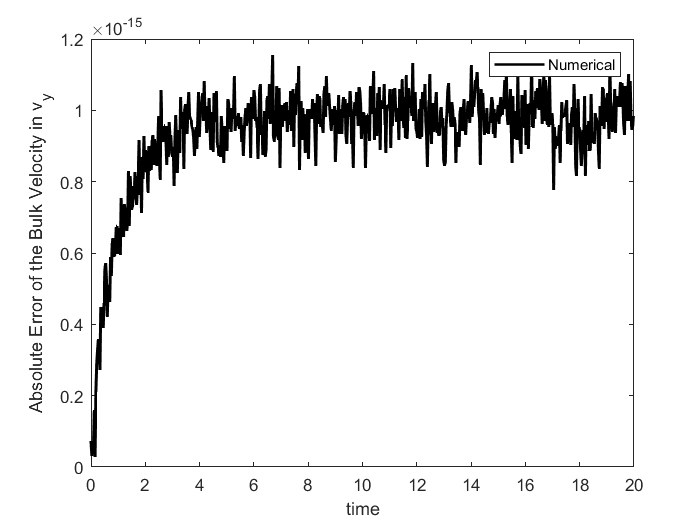}
    \caption*{(d)}
\end{minipage}
\begin{minipage}[b]{0.33\linewidth}
    \centering
    \includegraphics[width=1.05\textwidth]{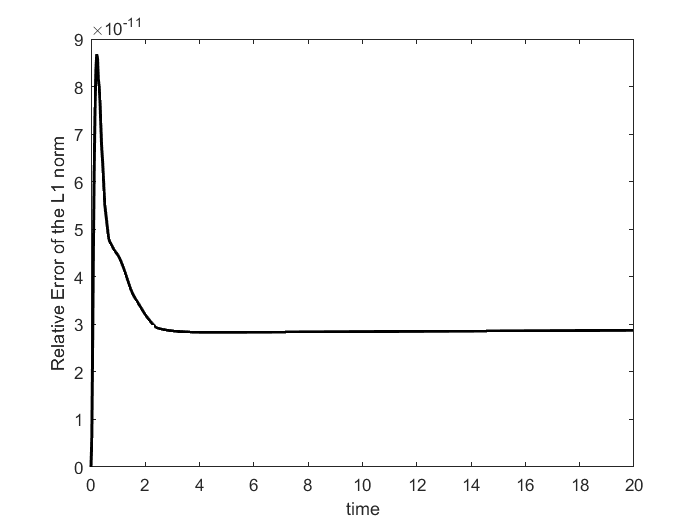}
    \caption*{(b)}
    \includegraphics[width=1.05\textwidth]{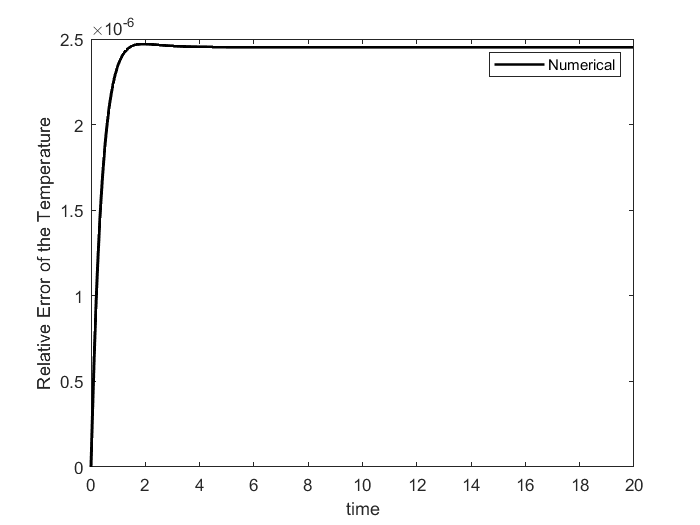}
    \caption*{(e)}
\end{minipage}
\begin{minipage}[b]{0.33\linewidth}
    \centering
    \includegraphics[width=1.05\textwidth]{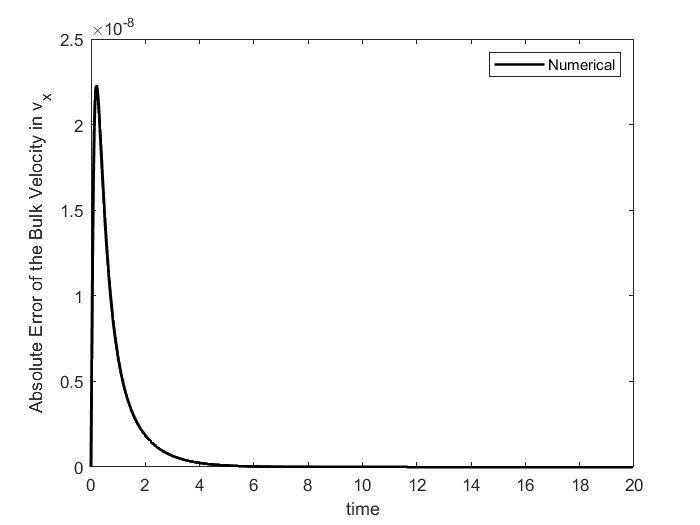}
    \caption*{(c)}
    \includegraphics[width=1.05\textwidth]{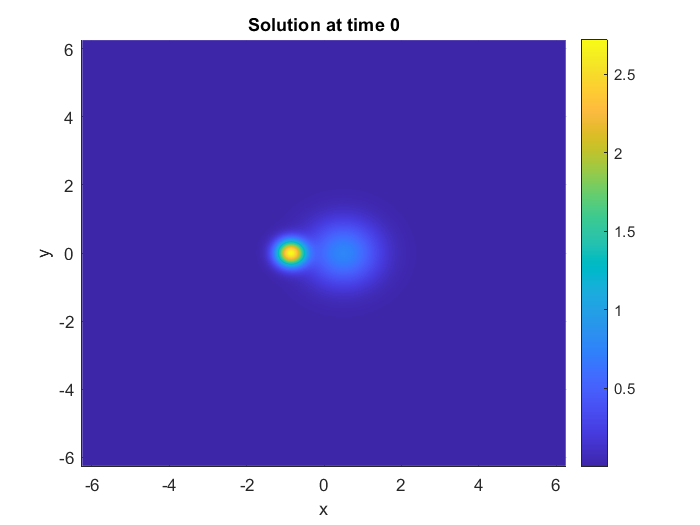}
    \caption*{(f)}
\end{minipage}
\caption{Figures (a)-(e): Relative macro-parameters for equation \eqref{2DFP} with initial distribution of two Maxwellians defined by Table \ref{2maxwells}. Mesh $N_{v_x}=N_{v_y}=200$, $CFL=6$. Figure (f): The initial distribution.}
\label{FPtwoMaxwell_images}
\end{figure}

\begin{figure}[h!]
\begin{minipage}[b]{0.33\linewidth}
    \centering
    \includegraphics[width=1.05\textwidth]{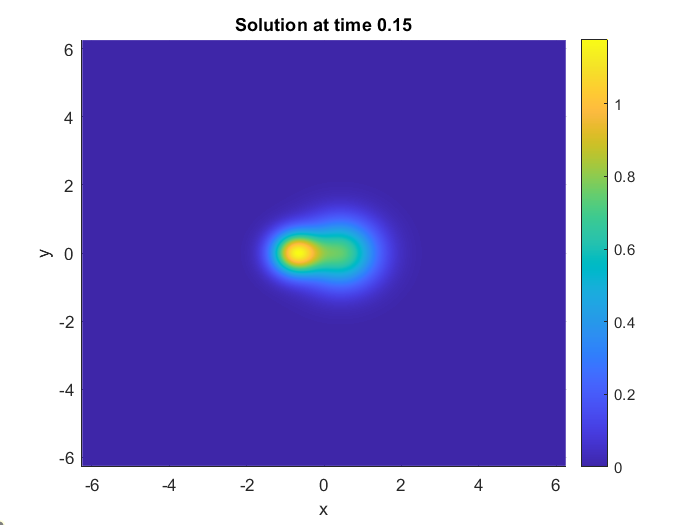}
    \includegraphics[width=1.05\textwidth]{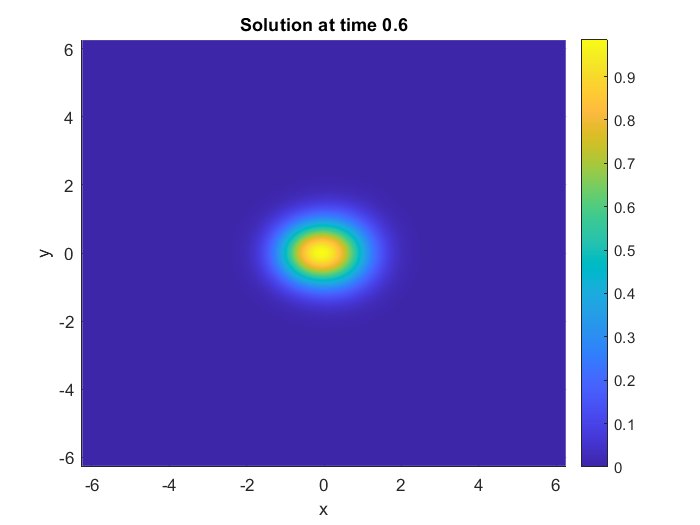}
\end{minipage}
\begin{minipage}[b]{0.33\linewidth}
    \centering
    \includegraphics[width=1.05\textwidth]{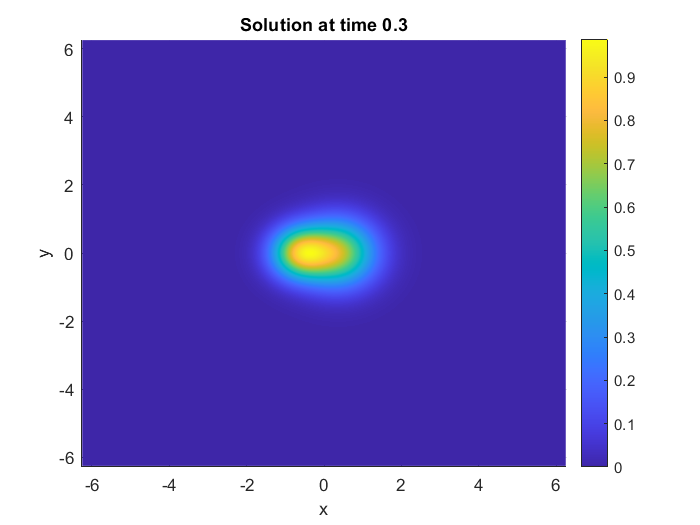}
    \includegraphics[width=1.05\textwidth]{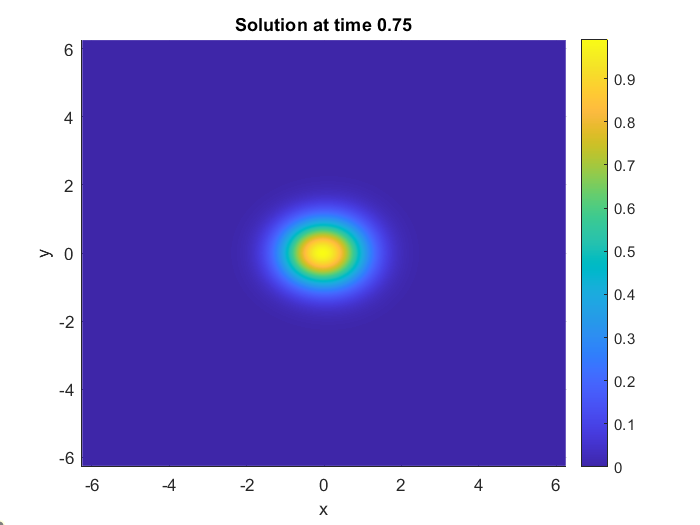}
\end{minipage}
\begin{minipage}[b]{0.33\linewidth}
    \centering
    \includegraphics[width=1.05\textwidth]{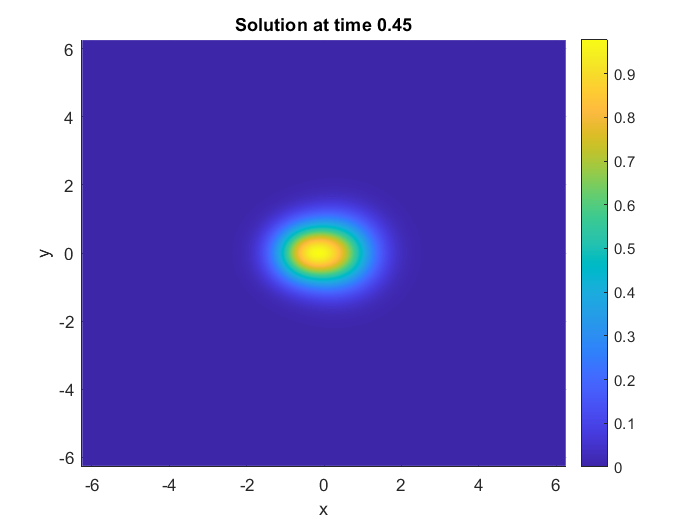}
    \includegraphics[width=1.05\textwidth]{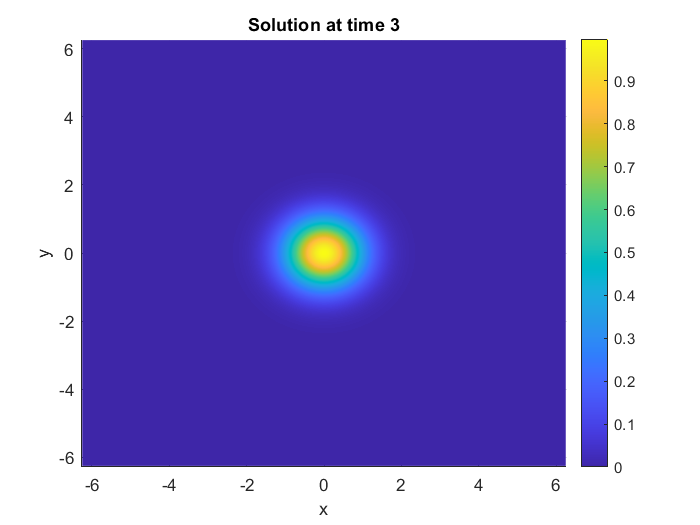}
\end{minipage}
\caption{Various snapshots of the numerical solution to equation \eqref{2DFP} with initial distribution of two Maxwellians defined by Table \ref{2maxwells}. Mesh $N_{v_x}=N_{v_y}=200$, $CFL=6$. Times: 0.15, 0.30, 0.45, 0.60, 0.75, 3.}
\label{FPtwoMaxwell_snapshots}
\end{figure}


\section{Conclusion}

In this paper, we proposed a new EL-RK-FV method for solving convection and  convection-diffusion equations. Whereas SL methods require solving for the exact characteristics, which is often highly nontrivial for nonlinear problems, our EL method computes linear space-time curves as the approximate characteristics. WENO-AO schemes allowed us to perform spatial reconstruction at arbitrary points which was essential since the traceback grid was not necessarily the (uniform) background grid. By working with the time-differential form, we could use a method-of-lines approach. Explicit RK methods were used for pure convection problems, and IMEX RK methods were used for convection-diffusion equations. Dimensional splitting was used for higher-dimensional problems. Several one- and two-dimensional test problems demonstrated the algorithm's robustness, high-order accuracy, and ability to allow extra large time steps. Ongoing and future work includes modifying the algorithm to handle shocks and rarefaction waves (the authors already have promising results that will be written in another paper), and developing a non-splitting version of the EL-RK-FV algorithm.


\section*{Acknowledgements}

Research is supported by NSF grant NSF-DMS-1818924 and NSF-DMS-2111253, Air Force Office of Scientific Research FA9550-18-1-0257 and University of Delaware.
The authors would like to thank William Taitano (Air Force Research Laboratory) and Alexander Alekseenko (California State University at Northridge) for their help in constructing the Leonard-Bernstein Fokker-Planck equation test problems. Further thanks goes to Robert Martin (Army Research Laboratory) and Alexander Alekseenko for their mentorship during Joseph Nakao's summer internship at the Air Force Research Laboratory.

\clearpage

\section*{Appendices}

\subsection*{Appendix A. Butcher Tables for Explicit RK Methods}

\begin{table}[h!]
    \centering
    \begin{minipage}[b]{0.49\linewidth}
    \centering
    \caption*{SSP RK3}
    \begin{tabular}{c|lll}
        0&0&0&0\\
        1&1&0&0\\
        1/2&1/4&1/4&0\\
        \hline
        &1/6&1/6&2/3\\
    \end{tabular}
    \end{minipage}
    \begin{minipage}[b]{0.49\linewidth}
    \centering
    \caption*{RK4}
    \begin{tabular}{c|llll}
        0&0&0&0&0\\
        1/2&1/2&0&0&0\\
        1/2&0&1/2&0&0\\
        1&0&0&1&0\\
        \hline
        &1/6&1/3&1/3&1/6\\
    \end{tabular}
    \end{minipage}
\end{table}

\subsection*{Appendix B. Fourth-order operator splitting}
Define two constants
\[\gamma_1=\frac{1}{2-2^{1/3}}\approx 1.351207191959658\quad\text{and}\quad\gamma_2=\frac{-2^{1/3}}{2-2^{1/3}}\approx -1.702414383919315.\]
Given constants $\gamma_1$ and $\gamma_2$, the fourth-order splitting method in \cite{Forest1990,Yoshida1990} has seven stages, compared to the three stages required for Strang splitting.\\
\ \\
\textbf{Step 1 ($x-$direction).} Solve equation \eqref{Strang1} over a time step $\gamma_1\Delta t/2$.\\
\textbf{Step 2 ($y-$direction).} Solve equation \eqref{Strang2} over a time step $\gamma_1\Delta t$.\\
\textbf{Step 3 ($x-$direction).} Solve equation \eqref{Strang1} over a time step $(\gamma_1+\gamma_2)\Delta t/2$.\\
\textbf{Step 4 ($y-$direction).} Solve equation \eqref{Strang2} over a time step $\gamma_2\Delta t$.\\
\textbf{Step 5 ($x-$direction).} Solve equation \eqref{Strang1} over a time step $(\gamma_1+\gamma_2)\Delta t/2$.\\
\textbf{Step 6 ($y-$direction).} Solve equation \eqref{Strang2} over a time step $\gamma_1\Delta t$.\\
\textbf{Step 7 ($x-$direction).} Solve equation \eqref{Strang1} over a time step $\gamma_1\Delta t/2$.\\
\ \\
Note that steps 2 and 6 require steps larger than $\Delta t$, and steps 3, 4, and 5 require steps backwards in time.

\subsection*{Appendix C. Butcher Tables for IMEX RK Methods}

All IMEX RK schemes included in this appendix are taken from \cite{Ascher1997}. By construction, each IMEX RK scheme has slightly different properties that are better suited from different problems. Some schemes might have better damping properties and stability regions, be stiffly accurate, etc. In this appendix we opt to pad the implicit Butcher tables with zeros.

\begin{table}[h!]
    \centering
    \begin{minipage}[b]{0.49\linewidth}
    \centering
    \caption*{IMEX(1,1,1) -- Implicit Table}
    \begin{tabular}{c|ll}
        0&0&0\\
        1&0&1\\
        \hline
        &0&1
        \end{tabular}
    \end{minipage}
    \begin{minipage}[b]{0.49\linewidth}
    \centering
    \caption*{IMEX(1,1,1) -- Explicit Table}
    \begin{tabular}{c|ll}
        0&0&0\\
        1&1&0\\
        \hline
        &1&0
        \end{tabular}
    \end{minipage}
\end{table}

\begin{table}[h!]
    \centering
    \begin{minipage}[b]{0.49\linewidth}
    \centering
    \caption*{IMEX(1,2,2) -- Implicit Table}
    \begin{tabular}{c|ll}
        0&0&0\\
        1/2&0&1/2\\
        \hline
        &0&1
        \end{tabular}
    \end{minipage}
    \begin{minipage}[b]{0.49\linewidth}
    \centering
    \caption*{IMEX(1,2,2) -- Explicit Table}
    \begin{tabular}{c|ll}
        0&0&0\\
        1/2&1/2&0\\
        \hline
        &0&1
        \end{tabular}
    \end{minipage}
\end{table}

\begin{table}[h!]
    \centering
    \begin{minipage}[b]{0.49\linewidth}
    \centering
    \caption*{IMEX(2,2,2) -- Implicit Table}
    \begin{tabular}{c|lll}
        0&0&0&0\\
        $\gamma$&0&$\gamma$&0\\
        1&0&$1-\gamma$&$\gamma$\\
        \hline
        &0&$1-\gamma$&$\gamma$
        \end{tabular}
    \end{minipage}
    \begin{minipage}[b]{0.49\linewidth}
    \centering
    \caption*{IMEX(2,2,2) -- Explicit Table}
    \begin{tabular}{c|lll}
        0&0&0&0\\
        $\gamma$&$\gamma$&0&0\\
        1&$\delta$&$1-\delta$&0\\
        \hline
        &$\delta$&$1-\delta$&0
        \end{tabular}
    \end{minipage}
    \caption*{Let $\gamma=1-\sqrt{2}/2$ and $\delta = 1 - 1/(2\gamma)$.}
\end{table}

\begin{table}[h!]
    \centering
    \begin{minipage}[b]{0.49\linewidth}
    \centering
    \caption*{IMEX(2,3,3) -- Implicit Table}
    \begin{tabular}{c|lll}
        0&0&0&0\\
        $\gamma$&0&$\gamma$&0\\
        $1-\gamma$&0&$1-2\gamma$&$\gamma$\\
        \hline
        &0&1/2&1/2
        \end{tabular}
    \end{minipage}
    \begin{minipage}[b]{0.49\linewidth}
    \centering
    \caption*{IMEX(2,3,3) -- Explicit Table}
    \begin{tabular}{c|lll}
        0&0&0&0\\
        $\gamma$&$\gamma$&0&0\\
        $1-\gamma$&$\gamma-1$&$2(1-\gamma)$&0\\
        \hline
        &0&1/2&1/2
        \end{tabular}
    \end{minipage}
    \caption*{Let $\gamma=(3+\sqrt{3})/6$.}
\end{table}

\begin{table}[h!]
    \centering
    \begin{minipage}[b]{0.49\linewidth}
    \centering
    \caption*{IMEX(2,3,2) -- Implicit Table}
    \begin{tabular}{c|lll}
        0&0&0&0\\
        $\gamma$&0&$\gamma$&0\\
        1&0&$1-\gamma$&$\gamma$\\
        \hline
        &0&$1-\gamma$&$\gamma$
        \end{tabular}
    \end{minipage}
    \begin{minipage}[b]{0.49\linewidth}
    \centering
    \caption*{IMEX(2,3,2) -- Explicit Table}
    \begin{tabular}{c|lll}
        0&0&0&0\\
        $\gamma$&$\gamma$&0&0\\
        1&$\delta$&$1-\delta$&0\\
        \hline
        &0&$1-\gamma$&$\gamma$
        \end{tabular}
    \end{minipage}
    \caption*{Let $\gamma=(2-\sqrt{2})/2$ and $\delta = -2\sqrt{2}/3$.}
\end{table}

\begin{table}[h!]
    \centering
    \begin{minipage}[b]{0.49\linewidth}
    \centering
    \caption*{IMEX(3,4,3) -- Implicit Table}
    \begin{tabular}{c|llll}
        0&0&0&0&0\\
        $\gamma$&0&$\gamma$&0&0\\
        0.717933&0&0.282067&$\gamma$&0\\
        1&0&1.208497&-0.644363&$\gamma$\\
        \hline
        &0&1.208497&-0.644363&$\gamma$
    \end{tabular}
    \end{minipage}
    \begin{minipage}[b]{0.49\linewidth}
    \centering
    \caption*{IMEX(3,4,3) -- Explicit Table}
    \begin{tabular}{c|llll}
        0&0&0&0&0\\
        $\gamma$&$\gamma$&0&0&0\\
        0.717933&0.321279&0.396654&0&0\\
        1&-0.105858&0.552929&0.552929&0\\
        \hline
        &0&1.208497&-0.644363&$\gamma$
    \end{tabular}
    \end{minipage}
    \caption*{Let $\gamma=0.435867$.}
\end{table}

\begin{table}[h!]
    \centering
    \begin{minipage}[b]{0.49\linewidth}
    \centering
    \caption*{IMEX(4,4,3) -- Implicit Table}
    \begin{tabular}{c|lllll}
        0&0&0&0&0&0\\
        1/2&0&1/2&0&0&0\\
        2/3&0&1/6&1/2&0&0\\
        1/2&0&-1/2&1/2&1/2&0\\
        1&0&3/2&-3/2&1/2&1/2\\
        \hline
        &0&3/2&-3/2&1/2&1/2
        \end{tabular}
    \end{minipage}
    \begin{minipage}[b]{0.49\linewidth}
    \centering
    \caption*{IMEX(4,4,3) -- Explicit Table}
    \begin{tabular}{c|lllll}
        0&0&0&0&0&0\\
        1/2&1/2&0&0&0&0\\
        2/3&11/18&1/18&0&0&0\\
        1/2&5/6&-5/6&1/2&0&0\\
        1&1/4&7/4&3/4&-7/4&0\\
        \hline
        &1/4&7/4&3/4&-7/4&0
        \end{tabular}
    \end{minipage}
\end{table}

\clearpage
\subsection*{Appendix D. An illustrative example with IMEX(2,2,2)}

In this section, we couple the EL-RK-FV algorithm with IMEX(2,2,2), that is, two-stage implicit, two-stage explicit, and of combined order two. This scheme is L-stable and uses a second order DIRK method. The Butcher tables are given in Appendix C. Figure \ref{IMEX222} shows the lone sub-space-time region ${}_{1}\Omega_j$.

\begin{figure}[h!]
	\centering
	\includegraphics[width=0.5\textwidth]{./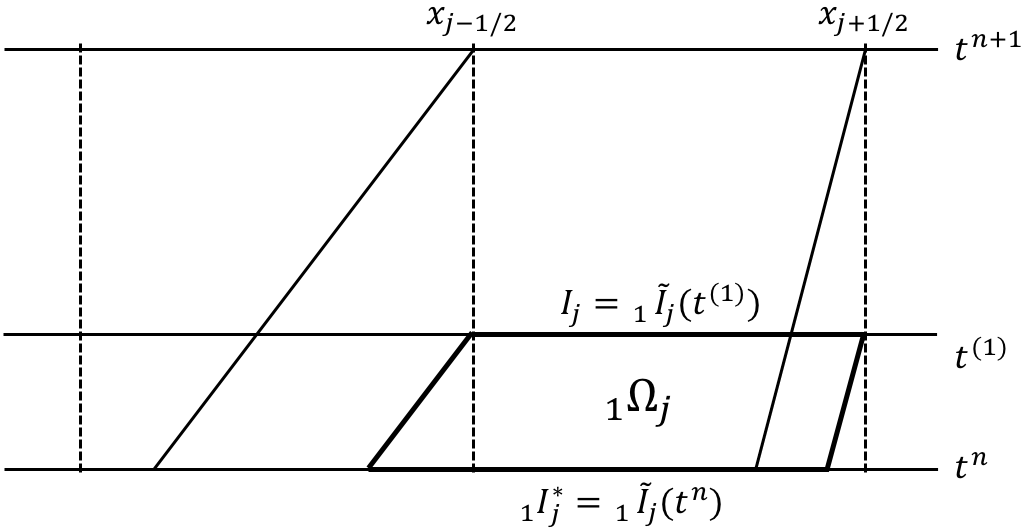}
	\caption{The space-time region ${}_{1}\Omega_j$ for IMEX(2,2,2).}
	\label{IMEX222}
\end{figure}

\noindent\textbf{Step 0a.} Compute the approximate characteristic speeds using equation \eqref{RHjump}. After defining the space-time region $\Omega_j$, compute the possibly nonuniform traceback cell averages $\tilde{u}_j(t^n)$ using Algorithm 1.\\
\textbf{Step 0b.} Use the possibly nonuniform traceback cell averages $\tilde{u}_j(t^n)$ in Algorithm 2 to compute $\hat{K}_1 = \mathcal{F}(U^n;t^n))$.\\
\textbf{Step 1a.} Using the same approximate characteristic speeds from step 1a, define the sub-space-time region ${}_{1}\Omega_j$ as seen in Figure \ref{IMEX222}. Compute the possibly nonuniform traceback cell averages ${}_{1}\tilde{u}_j^n$ using Algorithm 1.\\
\textbf{Step 1b.} Use the possibly nonuniform traceback cell averages ${}_{1}\tilde{u}_j^n$ in Algorithm 2 to compute ${}_{1}\hat{K}_1 = \mathcal{F}({}_{1}U^n;t^n))$.\\
\textbf{Step 1c.} Recalling equation \eqref{5stencil}, solve equation \eqref{IMEXsubstage} by solving the linear system
\begin{equation}
    \left(\mathbf{I}-\frac{\gamma\epsilon\Delta t}{\Delta x^2}\mathbf{D}_4\right){}_{1}\vv{U}^{(1)} = {}_{1}\vv{U}^n + \gamma\Delta t{}_{1}\vv{\hat{K}}_1 + \gamma\Delta t\vv{g}(x,t^{(1)}),
\end{equation}
where $\vv{g}_j(x,t^{(1)}) = \int_{I_j}{g(x,t^{(1)})dx}$ can be computed with a Gaussian quadrature.\\
\textbf{Step 1d.} Compute the uniform cell averages $\overline{u}_j^{(1)} = {}_{1}U_j^{(1)}/\Delta x$.\\
\textbf{Step 1e.} Compute the uniform cell averages $\overline{u}_{xx,j}^{(1)}$ using equation \eqref{5stencil},
\begin{equation}
    \vv{\overline{u}}_{xx}^{(1)} = \frac{1}{\Delta x^2}\mathbf{D}_4\vv{\overline{u}}^{(1)}.
\end{equation}
\textbf{Step 1f.} Compute the possibly nonuniform traceback cell averages $\tilde{u}_j^{(1)}$ and $\tilde{u}_{xx,j}^{(1)}$ (we are now in the space-time region $\Omega_j$) using Algorithm 1.\\
\textbf{Step 1g.} Compute $K_1 = \mathcal{G}(U^{(1)};t^{(1)})$,
\begin{equation}
    K_1 = \epsilon\Delta\tilde{x}_j^{(1)}\tilde{u}_{xx,j}^{(1)} + \int_{\tilde{I}_j(t^{(1)})}{g(x,t^{(1)})dx},
\end{equation}
where the definite integral involving $g(x,t)$ can be evaluated using a Gaussian quadrature.\\
\textbf{Step 1h.} Use the possibly nonuniform traceback cell averages $\tilde{u}_j^{(1)}$ in Algorithm 2 to compute $\hat{K}_2 = \mathcal{F}(U^{(1)};t^{(1)})$.\\
\textbf{Step 2.} Recalling equation \eqref{5stencil}, solve equation \eqref{IMEX} by solving the linear system
\begin{equation}
    \left(\mathbf{I}-\frac{\gamma\epsilon\Delta t}{\Delta x^2}\mathbf{D}_4\right)\vv{U}^{n+1} = \vv{U}^n + (1-\gamma)\Delta t\vv{\hat{K}}_1 + \Delta t(\delta\vv{\hat{K}}_1 + (1-\delta)\vv{\hat{K}}_2) + \gamma\Delta t\vv{g}(x,t^{n+1}),
\end{equation}
where $\vv{g}_j(x,t^{n+1}) = \int_{I_j}{g(x,t^{n+1})dx}$ can be computed with a Gaussian quadrature.



\end{document}